\documentclass[preprint,12pt]{elsarticle}

\usepackage{amssymb}
\usepackage{amsmath}
\usepackage{amsthm}

\newtheorem{theorem}{Theorem}

\newtheorem{definition}{Definition}

\journal{Nuclear Physics B}

\begin{document}

\begin{frontmatter}



\title{A Geometry-Aware Operator Learning Framework for Interface Problems on Varying Domains}


\author[1]{Shanshan Xiao}
\author[2]{Ye Li} 
\author[1]{Zhongyi Huang\corref{cor1}}
\author[1]{Hao Wu}

\cortext[cor1]{Corresponding author. E-mail: zhongyih@tsinghua.edu.cn}

\affiliation[1]{organization={Dept. of Mathematical Sciences, Tsinghua University},
            city={Beijing},
            postcode={100084}, 
            country={China}}
\affiliation[2]{organization={College of Computer Science and Technology, Nanjing University of Aeronautics and Astronautics},
	city={Nanjing},
	postcode={211106}, 
	country={China}}

\begin{abstract}
	Solving Partial Differential Equation (PDE) interface problems on varying domains is a critical task in design and optimization, yet it remains computationally prohibitive for traditional solvers. Although operator learning has shown promise on fixed geometries, its potential for geometry-dependent interface problems has been largely unexplored. To bridge this gap, we propose an extension-based neural operator framework applicable to general linear interface problems. A key innovation of our method is the integration of the Tailored Finite Point Method (TFPM) with our base network, which reduces memory consumption and effectively alleviates the curse of dimensionality.  On the theoretical front, we establish the continuity of the Helmholtz operator with respect to domain perturbations and provide rigorous error estimates for the proposed encodings. Comprehensive numerical experiments demonstrate that our framework achieves state-of-the-art accuracy and robustness. Consequently, this work provides a powerful, data-efficient tool for varying-domain simulations, offering new possibilities for real-time shape optimization.
\end{abstract}



\begin{keyword}
Operator learning \sep Interface problems \sep Varying domains \sep Error estimates
\end{keyword}

\end{frontmatter}



\section{Introduction}

Linear interface problems are ubiquitous in science and engineering. They play a pivotal role in modeling complex physical systems where distinct materials or phases interact, such as in heat conduction through composite materials, multi-phase flow dynamics, and electromagnetic field propagation in heterogeneous media\cite{wang2002computational, hashin1990thermoelastic, hubatsch2025transport, weber2004modeling}. These problems are mathematically characterized by the existence of coupling conditions between solutions across different interfaces, along with discontinuous coefficients or source terms at possible interface boundaries. This leads to non-smooth or jump solutions, posing significant challenges for numerical simulations.

To address this complex problem, researchers have proposed numerous numerical algorithms. The finite element method (FEM)\cite{babuvska1974solution, barrett1987fitted, chen1998finite} handles interfaces naturally by aligning meshes with the interface, though this often requires mesh updates for moving interfaces. To circumvent mesh restrictions, embedded or immersed methods such as the immersed boundary method (IBM) and the immersed interface method (IIM) have been developed\cite{peskin2002immersed, leveque1994immersed}; these methods modify the numerical stencil or add forcing terms to account for solution jumps on a fixed Cartesian grid. The discontinuous Galerkin (DG) method\cite{massjung2012unfitted, mu2016new, saye2019efficient} offers high-order accuracy and flexibility in handling discontinuities by allowing basis functions to be discontinuous across elements. Finite difference methods\cite{huang2009tailored}, often combined with interface-tracking techniques, provide simplicity and efficiency, particularly when adapted with correction terms to maintain accuracy across interfaces. More recent approaches, such as the Voronoi interface method (VIM)\cite{guittet2015solving, helgadottir2018solving}, leverage computational geometry to discretize irregular domains with robust accuracy. Together, these methods form a comprehensive toolkit for solving interface problems, each offering distinct advantages in terms of accuracy, geometric flexibility, and computational efficiency.

Building upon the success of neural networks in computational mathematics, researchers have recently extended their application to solving interface problems for PDEs. For example, Wang\cite{wang2020mesh} combined a shallow neural network for boundary condition approximation with the Deep Ritz Method to address variational formulations of interface problems, and He\cite{he2022mesh} use one NN structure to capture the non-smooth or even discontinuous solutions. Similarly, Liu\cite{liu2020multi} developed a multi-scale neural network to solve the Poisson-Boltzmann equation using variational principles. A prominent direction in this area is the fusion of domain decomposition with deep learning. Dwivedi’s Distributed PINN (DPINN)\cite{dwivedi2019distributed} method partitions the domain into disjoint subregions, each handled by a separate subnet trained via a composite loss function. He\cite{he2022mesh} extended this idea by introducing static weights into the loss and an adaptive sampling scheme to boost accuracy. Jagtap and colleagues further advanced the field with XPINNs\cite{jagtap2020extended}, which apply domain decomposition to solve nonlinear PDEs in complex domains. These developments have inspired subsequent methods such as DeepDDM\cite{li2020deep} and Interface Neural Networks (INN)\cite{wu2022inn}, reinforcing the integration of deep learning and domain decomposition for interface problems.

But in some applications, such as geometric optimization and inverse material identification\cite{edelman1998geometry, absil2008optimization}, we frequently encounter scenarios where the computational domain, interface location, or physical parameters vary continuously. For these problems, relying on traditional numerical methods (e.g., FEM or FDM) is computationally prohibitive, as they necessitate mesh regeneration and iterative solving for every new configuration. Similarly, conventional neural network approaches are often restricted to a fixed setting; any change in the geometry or parameters typically requires retraining the network from scratch, which incurs significant time and computational costs.

To overcome these bottlenecks, Operator Learning has emerged as a transformative paradigm\cite{lu2021learning, lifourier}. Currently, there have been some research advancements in operator learning for interface problems on fixed domains\cite{wu2024solving, li2024tailored, du2024physics}. Building upon these, we aim to explore interface problems on varying domains. In existing studies, approaches to handling varying domains primarily fall into two categories: extension-based methods and deformation-based methods. Among them, deformation-based methods aim to map irregular domains to a reference domain, thereby establishing a mapping to the Banach space on the reference domain. Based on this, existing operator learning architectures (such as DeepONet, FNO, etc.) can be applied. Representative deformation-based approaches include Geo-FNO, D2D \& D2E, and DIMON\cite{li2023fourier, xiao2024deformation, yin2024dimon}. Another approach is extension-based methods, which seek to cover all varying domains within a larger fixed domain and extend functions defined on different domains to this larger domain. This transforms functions originally defined on different domains into elements of the Banach space on the extended domain. For example, GINO is an extension-based method\cite{li2023geometry}.

Building upon this, we consider the more complex varying-domain interface problems, addressing cases where both the domain and possibly multiple interfaces undergo changes. Given the complexity of simultaneous variations in the domain and interfaces, as well as the presence of multiple interfaces, we adopt an extension-based approach to handle changes in both the domain and interfaces. For the following general linear interface PDE:
\begin{equation}
	\begin{cases}
		\mathcal{L}u = f, & \text{in } \Omega \backslash \Gamma, \\
		\mathcal{B}u = g, & \text{on } \partial \Omega, \\
		\mathcal{F}_{1}(u^{+},u^{-},\frac{\partial u^+}{\partial n}, \frac{\partial u^-}{\partial n}) = 0, & \text{on } \Gamma, \\
		\mathcal{F}_{2}(u^{+},u^{-},\frac{\partial u^+}{\partial n}, \frac{\partial u^-}{\partial n}) = 0, & \text{on } \Gamma,
	\end{cases}
\end{equation}
we first transformed the complex solution mapping into a mapping from a Banach space to a Banach space via zero extension, enabling the use of neural operators for learning. Subsequently, we proposed an encoding method based on a standard rectangular grid, proved the sensitivity of the Helmholtz equation to variations in the domain and interface, and provided a simple error estimate for encoding the domain using the characteristic function and the signed distance function (SDF). In addition, we improved the basic method by incorporating the Tailored Finite Point Method (TFPM)\cite{huang2009tailored, han2008tailored, han2009tailored, han2008tailored2} basis, which reduces the storage requirements during training and achieves good predictive performance while saving GPU memory. Also we conducted a series of experiments to demonstrate the accuracy of our proposed method.

The remainder of this paper is organized as follows. In Section 2, we provide the mathematical formulation of the interface problems on variable domains and introduce the necessary notations. Section 3 details our proposed methodology, describing the neural operator architecture, the specific encoding schemes, and the novel framework integrating the Tailored Finite Point Method. Section 4 is devoted to the theoretical analysis, where we establish the shape sensitivity of the Helmholtz equation with respect to domain and interface perturbations and derive error estimates for the domain encoding. In Section 5, we present extensive numerical experiments to validate the accuracy and robustness of the proposed method. Finally, concluding remarks are given in Section 6.

\section{Problem setup}
First of all, we consider the interface problem as following:

\begin{equation}
	\begin{cases}
		\mathcal{L}u = f, & \text{in } \Omega \backslash \Gamma, \\
		\mathcal{B}u = g, & \text{on } \partial \Omega, \\
		\mathcal{F}_{1}(u^{+},u^{-},\frac{\partial u^+}{\partial n}, \frac{\partial u^-}{\partial n}) = 0, & \text{on } \Gamma, \\
		\mathcal{F}_{2}(u^{+},u^{-},\frac{\partial u^+}{\partial n}, \frac{\partial u^-}{\partial n}) = 0, & \text{on } \Gamma,
	\end{cases}
\end{equation}
Here, $\mathcal{L}$ is a linear PDE operator, $\mathcal{B}$ is a linear boundary operator, and $\mathcal{F}_{1}$ and $\mathcal{F}_{2}$ are linear interface condition operators. And we assume that all domains \(\Omega\) under consideration are bounded and that all interfaces $\Gamma$ are curves lying in the interior of $\overline{\Omega}$.

To better handle the abstract operator $\mathcal{L}, \mathcal{B}, \mathcal{F}_{1}, \mathcal{F}_{2} $ introduced above, we can consider a case of the most common Helmholtz equation interface problem as follows:
\begin{equation}
	\begin{split}
		\begin{cases}
			-\nabla(a(x)\nabla u(x)) + b(x)u(x) = f(x), \quad \text{for}\quad x \in \Omega \backslash \Gamma , \\
			u|_{\partial \Omega} = g(x), \\
			[u]|_{\Gamma} = \alpha, \\
			[a\nabla u \cdot \textbf{n}]|_{\Gamma} = \beta,
			\label{interfacepde}
		\end{cases}
	\end{split}
\end{equation}
and $\textbf{n}$ denotes the unit normal vector pointing from the internal toward the external of the interface. We aim to learn the solution mapping of \eqref{interfacepde} on varying domains, contains both the outer boundary $\Omega$ and the interface $\Gamma$, i.e., the mapping
\begin{equation}
	\mathcal{G}: a(x)_{\Omega} \times b(x)_{\Omega}  \times f(x)_{\Omega} \times g(x)_{\partial \Omega} \times \alpha_{\Gamma} \times \beta_{\Gamma} \rightarrow u(x).
\end{equation}

After introducing the problem to be solved, we will propose a general method for learning the mapping $\mathcal{G}$.

\section{Methods}
For varying-domain problems, two principal methodologies are prevalent: the extension method and the deformation method. In light of the complexity arising from the interface conditions and the variation of the interface $\Gamma$, we favor the extension-based approach in the present study. Inspired by GINO, we employ the aforementioned specialized encoding approach and adopt an FNO-based network architecture to learn $\mathcal{G}$.

\subsection{Neural network architecture}

Back to the original problem, we aim to learn the mapping
\begin{equation}
	\mathcal{G}: a(x)_{\Omega} \times b(x)_{\Omega}  \times f(x)_{\Omega} \times g(x)_{\partial \Omega} \times \alpha_{\Gamma} \times \beta_{\Gamma} \rightarrow u(x).
\end{equation}

However, here $\alpha_{\Gamma}, \beta_{\Gamma}$ and $a(x)_{\Omega}, b(x)_{\Omega}, f(x)_{\Omega}, g(x)_{\partial\Omega}$ do not constitute Banach space. Therefore, we first perform an extension of these functions, based on the assumption that all $\Omega$ are bounded domain in $\mathbb{R}^{d}$, and $\Gamma \subset \bar{\Omega}$, we can find a large square $[s, t]^d$ that covers all $\Omega$.

Based on this, taking $a(x)_{\Omega}$ and $\alpha_{\Gamma}$ as example we perform zero extension on these functions to $[s, t]^d$ as following:
\begin{equation}
	\hat{a}(x) = \begin{cases}
		a(x) & \text{if} \quad x \in \Omega, \\
		0 & \text{if} \quad x \notin \Omega,
	\end{cases}
	\quad \hat{\alpha}(x) = \begin{cases}
		\alpha(x) & \text{if} \quad x \in \Gamma, \\
		0 & \text{if} \quad x \notin \Gamma.
	\end{cases}
\end{equation}
Similarily, $u(x)_{\Omega}, b(x)_{\Omega}, f(x)_{\Omega}, g(x)_{\partial\Omega}, \beta_{\Gamma}$ can be extended as the same method. Thus, we obtain the extended function, denoted as $\hat{\alpha}, \hat{\beta}$ and $\hat{a}(x),$ $\hat{b}(x),$ $\hat{f}(x),$ $\hat{u}(x),$ $\hat{g}(x)$. Indeed, the extended functions lose information about the original domain. To address this, we additionally extend the domain $\Omega$ and the interface $\Gamma$ by representing them as functions on $[s, t]^{d}$, we denote them as $\phi_{\Omega}$ and $\phi_{\Gamma}$. Here, there are many choices for the extension method; we employ two of them based on the characteristic function and the signed distance function respectively. 

Based on this, we define the extended operator $\hat{\mathcal{G}}$:
\begin{equation}
	\hat{\mathcal{G}}: \phi_{\Omega} \times \phi_{\Gamma} \times \hat{a}(x) \times \hat{b}(x)  \times \hat{f}(x) \times \hat{g}(x) \times \hat{\alpha} \times \hat{\beta} \rightarrow \hat{u}(x).
\end{equation}
Note that here $\hat{\mathcal{G}}$ is an operator from tensor of several Banach space to one Banach space, and it satisfies
$$\mathcal{G}(a_{\Omega},b_{\Omega},f_{\Omega},g_{\Omega},\alpha_{\Gamma}, \beta_{\Gamma}) = \hat{\mathcal{G}}(\phi_{\Omega} ,\phi_{\Gamma},\hat{a} ,\hat{b}, \hat{f},\hat{g} ,\hat{\alpha} ,\hat{\beta})|_{\Omega}.$$

Below, we provide a detailed definition of the extension operator.

\begin{definition}
	Suppose $\Omega$ is a bounded domain in $\mathbb{R}^{n}$, $\Gamma$ is an interface within $\Omega$, and $a,b,f$ are piecewise continuous functions on $\Omega$, while $g,\alpha,\beta$ are piecewise continuous functions on $\partial\Omega$ and $\Gamma$, respectively. Then, for the mapping $$\mathcal{G}: a(x)_{\Omega} \times b(x)_{\Omega}  \times f(x)_{\Omega} \times g(x)_{\partial \Omega} \times \alpha_{\Gamma} \times \beta_{\Gamma} \rightarrow u(x),$$
	where $u(x)$ is the solution to the equation \eqref{interfacepde}, we have the extended operator $\hat{\mathcal{G}}$ as defined above. The relationship between $\mathcal{G}$ and $\hat{\mathcal{G}}$ satisfies: 
	\begin{equation}
		\mathcal{G}(a_{\Omega},b_{\Omega},f_{\Omega},g_{\Omega},\alpha_{\Gamma}, \beta_{\Gamma}) = \hat{\mathcal{G}}(\phi_{\Omega} ,\phi_{\Gamma},\hat{a} ,\hat{b}, \hat{f},\hat{g} ,\hat{\alpha} ,\hat{\beta})|_{\Omega}. 
		\label{operatorG}
	\end{equation}
	\label{defG}
\end{definition}

After defining the extension operator, we will next demonstrate that this extended operator is unique.

\begin{theorem}[Uniqueness]
	The operator $\hat{\mathcal{G}}$ defined by $\mathcal{G}$ in Definition \ref{defG} is unique, i.e., if there exists another operator $\hat{\mathcal{G}}_2$ satisfying equation \eqref{operatorG}, then $\hat{\mathcal{G}} = \hat{\mathcal{G}}_2$.
\end{theorem}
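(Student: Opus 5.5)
The plan is to show that the encoded inputs $(\phi_{\Omega},\phi_{\Gamma},\hat a,\hat b,\hat f,\hat g,\hat\alpha,\hat\beta)$ determine the original data $(\Omega,\Gamma,a_\Omega,b_\Omega,f_\Omega,g_{\partial\Omega},\alpha_\Gamma,\beta_\Gamma)$ uniquely. Once that is known, the value of $\hat{\mathcal{G}}$ on each admissible input is forced by $\mathcal{G}$. Two conventions must be fixed first. The domain of $\hat{\mathcal{G}}$ is the image of the encoding map $E:(\Omega,\Gamma,a,b,f,g,\alpha,\beta)\mapsto(\phi_\Omega,\phi_\Gamma,\hat a,\dots,\hat\beta)$. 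The output $\hat u$ is, by construction, the zero extension of $u$ to $[s,t]^d$, so \eqref{operatorG} together with $\hat u=0$ off $\Omega$ is what pins down the output everywhere. Without the second condition, uniqueness could only hold for the restriction to $\Omega$, so I will state explicitly that this is the interpretation of Definition~\ref{defG}.

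\textbf{Step 1 (injectivity of the encoding of geometry).} I would show that $\phi_\Omega$ determines $\Omega$ and $\phi_\Gamma$ determines $\Gamma$, treating both encodings used in the paper.
\begin{itemize}
\item For the characteristic function, $\Omega=\{x:\phi_\Omega(x)=1\}$ directly.
\item For the signed distance function, with the convention $\phi_\Omega<0$ inside, $\Omega=\{\phi_\Omega<0\}$ and $\partial\Omega=\{\phi_\Omega=0\}$.
\item For $\Gamma$, take $\Gamma=\{\phi_\Gamma=0\}$, or use the indicator of the region enclosed by $\Gamma$, whose topological boundary recovers $\Gamma$.
\end{itemize}
Because $\Omega$ is open and $\Gamma$ is a closed curve, these level-set characterisations are exact and need no a.e.\ ambiguity. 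This matters because, if one works in $L^p$, a characteristic function only determines $\Omega$ up to null sets. I would therefore either work with pointwise-defined representatives, since the data are piecewise continuous, or note that the SDF is continuous and avoids this issue.

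\textbf{Step 2 (recovering the data).} Given $\Omega$ and $\Gamma$, restriction recovers the original functions: $a_\Omega=\hat a|_\Omega$, $g=\hat g|_{\partial\Omega}$, $\alpha=\hat\alpha|_\Gamma$, and similarly for the rest. So $E$ is injective and has a left inverse $E^{-1}$ on its image.

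\textbf{Step 3 (conclusion).} Suppose $\hat{\mathcal{G}}_2$ also satisfies \eqref{operatorG}. For any admissible input $X=E(\Omega,\Gamma,a,\dots,\beta)$, both $\hat{\mathcal{G}}(X)|_\Omega$ and $\hat{\mathcal{G}}_2(X)|_\Omega$ equal $\mathcal{G}(E^{-1}X)=u$. Both outputs also vanish on $[s,t]^d\setminus\Omega$, since they are zero extensions, and $\Omega$ is determined by $X$ through Step 1. Hence $\hat{\mathcal{G}}(X)=\hat{\mathcal{G}}_2(X)$ for every $X$.

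The main obstacle is Step 1 in function-space form. Boundary data $\hat g$, $\hat\alpha$, $\hat\beta$ live on measure-zero sets, so their zero extensions are zero as $L^p$ elements. Uniqueness therefore only holds if these objects are treated as pointwise-defined functions, or in a space where traces on $\Gamma$ and $\partial\Omega$ are meaningful. I would make this explicit by working with pointwise (piecewise continuous) representatives, matching the hypotheses of Definition~\ref{defG}.
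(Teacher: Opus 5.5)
The paper gives no proof of this theorem; it calls it ``clearly valid'' and omits the argument. So the comparison is with the one-line reasoning the authors presumably intend: $\hat{\mathcal{G}}$ is declared to output $\hat u$, the zero extension of $\mathcal{G}$'s output, and is therefore determined.

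Your argument is correct and more careful than that. You state explicitly the two conventions without which the theorem is false as literally written. First, \eqref{operatorG} only constrains $\hat{\mathcal{G}}(X)|_{\Omega}$: adding to $\hat{\mathcal{G}}(X)$ a nonzero function supported in $[s,t]^d\setminus\overline{\Omega}$ gives another operator that still satisfies it. Second, \eqref{operatorG} imposes nothing on inputs outside the image of your encoding map $E$, even though the paper calls $\hat{\mathcal{G}}$ an operator on the whole product of Banach spaces.

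Your route is heavier than necessary in Steps~1--2. Injectivity of $E$ is not needed for uniqueness. For an admissible $X$, take any data tuple $D$ with $E(D)=X$. Then $\hat{\mathcal{G}}(X)$ and $\hat{\mathcal{G}}_2(X)$ both equal $\mathcal{G}(D)$ on $\Omega_D$ and both vanish off it, whether or not $D$ is the only preimage.

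What injectivity actually gives you is existence, i.e.\ that Definition~\ref{defG} defines anything at all. Accordingly, the $L^p$ difficulty you call the main obstacle threatens existence, not uniqueness. If $\hat g$, $\hat\alpha$, $\hat\beta$ are zero as $L^p$ elements, then data differing only in $g$, $\alpha$ or $\beta$ produce the same input but different solutions, so no $\hat{\mathcal{G}}$ satisfying \eqref{operatorG} exists.

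Under your pointwise convention, your Steps~1--2 therefore establish that $\hat{\mathcal{G}}$ is well defined, which the paper tacitly assumes and which is a fair addition. The uniqueness claim itself is immediate once you fix the two conventions, that the domain is the image of $E$ and the output is zero off $\Omega$.
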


Theorem 1 is clearly valid, so we omit its proof. Then after transforming the problem into learning the operator $\hat{\mathcal{G}}$, we can employ neural operators\cite{lifourier, lu2021learning} for learning. Therefore, in the next subsection, we will briefly introduce the architecture of the neural operator we use.

\subsection{Neural operator}

After obtaining $\hat{\mathcal{G}}$, we proceed to approximate it using a neural operator, which we denote as $\mathcal{G}_{\theta}$. To this end, we opt for the Fourier Neural Operator (FNO) as our specific approximant. And $\mathcal{G}_{\theta}$ can be represent as 
\begin{equation}
	\mathcal{G}_{\theta} = \mathcal{Q} \circ (W_{L} + \mathcal{K}_{L} + b_{L}) \circ \cdots \circ \sigma (W_{1} + \mathcal{K}_{1} + b_{1}) \circ \mathcal{P},
\end{equation}
here $\mathcal{P}: \mathbb{R}^{d_{input}} \rightarrow \mathbb{R}^{d_{1}}$ are pointwise neural networks that encode the input functions into higher dimensional space and  $\mathcal{Q}: \mathbb{R}^{d_{L}} \rightarrow \mathbb{R}^{d_{u}}$ the decode neural networks. And $\mathcal{K}_{L}:\{D\rightarrow\mathbb{R}^{d_{L+1}}\}$ are Fourier integral kernel operators, $b_{L}:D\rightarrow \mathbb{R}^{d_{L+1}}$ area the bias term of $L$-th layer, and $\sigma$ are activate functions.
\vspace{\baselineskip}\\
\textbf{Fourier integral operator}

The Fourier integral operator is defined as 
\begin{equation}
	(\mathcal{K}(\phi)v_{t})(x) = \mathcal{F}^{-1}(R_{\phi}\cdot (\mathcal{F}v_{t}))(x), \quad \forall x\in D,
	\label{fo}
\end{equation}
and here $R_{\phi}$ is the Fourier tranform of periodic function $\kappa: \Tilde{D}\rightarrow \mathbb{R}^{d_{v}\times d_{v}}$ parameterized by $\phi\in\Theta_{\mathcal{K}}$.

After introducing the network architecture, we will present the methods for encoding the input functions and the domain.

\subsection{Encoder domains and functions}

Under our assumptions, for a set of bounded domains $\{\Omega_{i}\in\mathbb{R}^{d}\}_{I}$, we can choose a big square domain $[s,t]^d$ that covers all $\Omega_{i}$, Then we obtain a uniform grid of size $n \times n \times \cdots \times n$ on $[s,t]^d$.  Next, we'll introduce how to encode our input.
\vspace{\baselineskip}\\
\textbf{Encode domain and interface}

First, we encode the domain and interface information. Without loss of generality, we assume that the domain $\Omega \subset [s,t]^d$ contains $m$ non-intersecting closed interfaces $\Gamma_i$. Following the extension assumption introduced earlier, we now have $m+1$ functions $\phi_{\Omega}, \phi_{\Gamma_i}$ defined on $[s,t]^d$, encoding the domain and each distinct interface, respectively. Next, depending on whether the characteristic function or the signed distance function (SDF) is used for the extension of domain information functions, we have two encoding approaches.

For the extension based on the characteristic function, we only need an $n^d$-dimensional matrix for encoding, and the encoding method is as follows:
\begin{equation}
	\Phi_{\mathbf{K}} = 
	\begin{cases}
		0, & p_{\mathbf{K}} \text{ in } [s,t]^d \backslash \Omega, \\
		1, & p_{\mathbf{K}} \text{ in } \Omega \backslash \cup \Gamma_{j}, j = 1,2,\cdots, m, \\
		i+1, & p_{\mathbf{K}} \text{ in } \text{int}(\Gamma_{i}) ,
	\end{cases}
\end{equation}
where $ p_{\mathbf{k}} $ denotes the grid point indexed by $\mathbf{K} =  (k_1, \cdots, k_d) $, and $ \text{int}(\Gamma_i) $ represents the bounded open set enclosed by the closed curve $ \Gamma_i $.

For the extension based on the signed distance function (SDF), our representation differs slightly. Although the SDF offers higher accuracy, we need to use a larger $ (m + 1) \times n^d $-dimensional matrix for encoding in order to preserve distance information. The encoding method is as follows:
\begin{equation}
	\Phi_{j, \mathbf{K}} = 
	\begin{cases}
		\phi_{\Omega}(p_{\mathbf{K}}), & j = 1, \\
		\phi_{\Gamma_{j-1}}(p_{\mathbf{K}}), & j>1, \\
	\end{cases} \quad j = 1,2,\cdots, m,
\end{equation}
where $ \Phi_{j, \mathbf{K}} $ represents the value at grid point $ p_{\mathbf{k}} $ in the $ j $-th $ n^d $-dimensional matrix, $\phi_{\Omega}, \phi_{\Gamma_{j}}$ are signed distance function of $\Omega, \Gamma_{j}$ respectively.

It is evident that there is a trade-off between the two encoding strategies. While the signed distance function (SDF) provides superior accuracy (we will later establish the relationship between the encoding error and the grid point density), it demands substantially more encoding capacity in scenarios involving multiple internal interfaces, in contrast to the characteristic function encoding which remains compact.
\vspace{\baselineskip}\\
\textbf{Encode functions defined on $\Omega$}

For functions defined on the entire domain $\Omega$, we have already performed zero extension, so the encoding can be simply defined as the values of the extended function at the grid points. For example, a function $ f(x) $ is encoded into an $ n^d $-dimensional matrix with:
\begin{equation}
	F_{\mathbf{K}} = \begin{cases}
		f(p_{\mathbf{K}}) & \text{if} \quad p_{\mathbf{K}} \in \Omega_{i}, \\
		0 & \text{if} \quad p_{\mathbf{K}} \notin \Omega_{i}.
	\end{cases}
\end{equation}
\vspace{\baselineskip}\\
\textbf{Encode functions defined on a low-dimensional manifold}

For the encoding of functions defined on low-dimensional manifolds—spec\-ifically, in our problem, the boundary conditions on $\partial \Omega$ and the interface conditions defined on $\Gamma_i$—these functions are defined on lower-dimensional manifolds. Simply taking the extended function values at grid points would result in a loss of information. Therefore, we introduce two encoding methods different from those mentioned above.

First, we present a simple encoding approach based solely on the function values at the nearest grid points, consider the boundary $\partial \Omega$ (For functions defined on the internal interface \(\Gamma\), the encoding approach is completely analogous.), $ g $ is a piecewise continuous function defined on the boundary, we encode the function $ g $ as follows:
\begin{equation}
	G_{\mathbf{K}} = g(\mathcal{P}_{\partial \Omega}(p_{\mathbf{K}}))\cdot  \exp(-\text{sdf}_{\partial \Omega}(p_{\mathbf{K}})² / (2\sigma^{2})),
\end{equation}
here $\mathcal{P}_{\partial\Omega}(p_{\mathbf{K}})$ denotes the projection of point $p_{\mathbf{K}}$ onto $\partial \Omega$, and we multiply by a Gaussian weight. This Gaussian weight depends on the distance from the grid point to the low-dimensional manifold $\partial \Omega$: as the distance increases, the weight decays to zero, while on $\partial \Omega$ itself, the weight equals one. At the same time, in a simple case—for example, when $ g $ is a constant function—we may omit the Gaussian weighting part.

\subsection{Neural operator with TFPM basis}

As proposed earlier, $\hat{\mathcal{G}}(\cdot)$ outputs a function value $c = \{c_{ij}\}$ at each node of an $ n \times n $ grid. This function can be interpreted as the coefficients of the first-order finite element basis functions on the grid. Specifically, if we consider the finite element basis functions $\phi_{ij}$ defined over the grid, 
the finite element approximation of the equation’s solution on the grid can be expressed as $ u = \sum_{i,j} c_{ij} \phi_{ij}.$

On this basis, if we consider replacing $\phi_{ij}$ with other global or local basis functions, we may obtain better properties. For instance, global basis functions can simplify the structure of the neural operator, while the Tailored Finite Point Method (TFPM)-based basis functions—introduced later—can outperform finite element basis functions in certain aspects due to their excellent approximation capability within small domains. To introduce the TFPM basis, we first briefly outline the core idea of the TFPM.
\vspace{\baselineskip}\\
\textbf{Tailored Finite Point Method}

First, we briefly introduce the fundamental idea of the Tailored Finite Point Method (TFPM). Consider the Helmholtz equation interface problem as shown in Equation \eqref{interfacepde}, and perform the following variable substitution:
\begin{equation}
	\begin{split}
		&y(x)  = \int_{x}^{x}\frac{1}{a(\xi)} d\xi, \quad \text{for} \quad x \in \Omega \\
		&c(y) \equiv a(x(y))b(x(y)),\quad F(y)\equiv a(x(y))f(x(y)),
	\end{split}
\end{equation}
then, the transformed variable $U(y)\equiv u(x(y))$ satisfies the following PDE:
\begin{equation}
	\begin{split}
		&-U^{''}(y) + c(y)U(y) = F(y), \quad \text{for}\quad x \in \Omega \backslash \Gamma\\
		& u|_{\partial \Omega} = g(x), \\
		& [u]|_{\Gamma} = \alpha, \quad [au']|_{\Gamma} = \beta.
	\end{split}
	\label{transform}
\end{equation}
Thus, equation \eqref{transform} can be handled using TFPM. First, we partition the domain $\Omega$ into a collection of small subregions $\{\Omega_{i}\}$. Then, within each subregion $\Omega_{i}$, TFPM approximates the parameter $c(y), F(y)$ as a constant $c_{j} = \mu_{j}^{2}, F_{j}$, so that on each small subregion, the equation simplifies to:
\begin{equation}
	-\Delta u + \mu_{j}^{2} u = F_{j}, \quad x\in\Omega_{j}.
\end{equation}
Thus, on each $\Omega_{j}$, the solution to this equation can be expanded in the following form:
\begin{equation}
	u_{j} = F_{j}/\mu_{j}^{2} + c_{0j}e^{\mu_{j}x} + c_{1j}e^{-\mu_{j}x} + c_{2j}e^{\mu_{j}y} + c_{3j}e^{-\mu_{j}y}.
	\label{tfpm}
\end{equation}
Based on this, for each subdomain $\Omega_{j}$, the TFPM considers the connection conditions between the midpoints of the four edges of the small square and other subdomains (or boundaries/interfaces). This leads to a corresponding system of linear equations $Ax = B$, from which the values of the parameters $\{c_{0j}, c_{1j}, c_{2j}, c_{3j}\}$ are obtained. In addition, the local bases $\{e^{\mu_{j}x}, e^{-\mu_{j}x}, e^{\mu_{j}y}, e^{-\mu_{j}y}\}$ here are referred to as the TFPM basis in this paper.

\begin{figure}[t]
	\centering
	\includegraphics[width=1.0\textwidth]{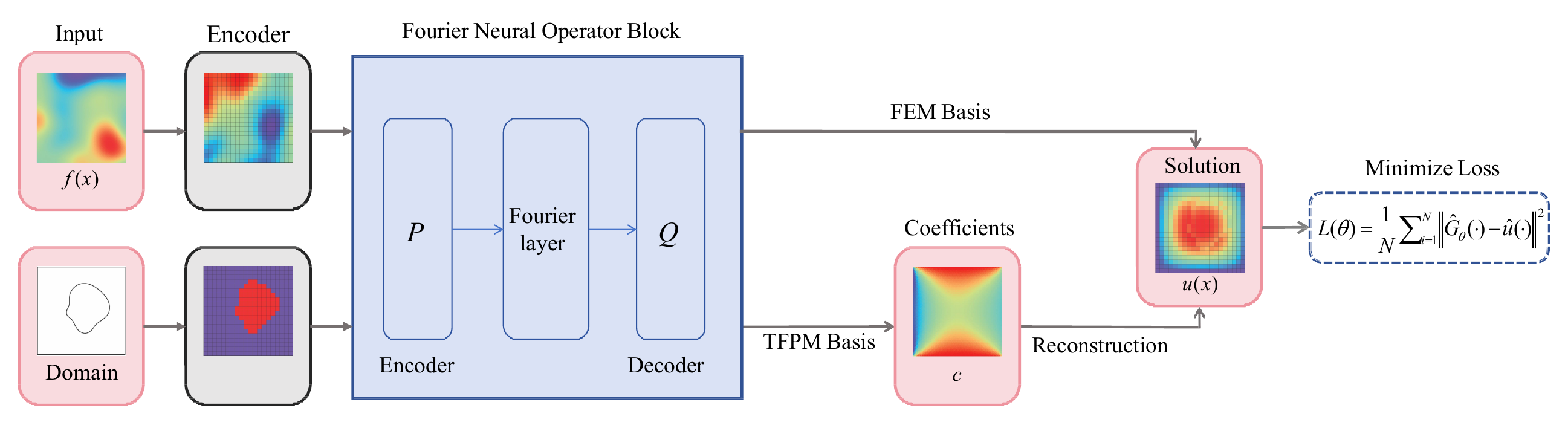}
	\caption{Neural network architecture}
	\label{network}
\end{figure}
\vspace{\baselineskip}
\textbf{Neural operator architecture with TFPM basis}

Given the characteristic of TFPM, which can represent the solution of the equation using parameterized formulations \eqref{tfpm} within subdomains, we propose a learning method that incorporates TFPM basis.

In this method, we learn the coeffients of \eqref{tfpm} rather than solution $u$, therefore the operator $\mathcal{G}$ becomes 
\begin{equation}
	\mathcal{G}_{TFPM}: a(x)_{\Omega} \times b(x)_{\Omega}  \times f(x)_{\Omega} \times g(x)_{\partial \Omega} \times \alpha_{\Gamma} \times \beta_{\Gamma} \rightarrow c,
\end{equation}
here $c$ is the coeffients of TFPM. 

Besides, we can also define a reconstruction mapping $\mathcal{R}e$ that maps coeffients $c$ to solution $u$ based on \eqref{tfpm}, then the map $\mathcal{R}e\circ \mathcal{G}_{TFPM}$ becomes:
\begin{equation}
	a(x)_{\Omega} \times b(x)_{\Omega}  \times f(x)_{\Omega} \times g(x)_{\partial \Omega} \times \alpha_{\Gamma} \times \beta_{\Gamma} \rightarrow u(x).
\end{equation}

Thus, we have derived a neural operator based on the TFPM basis, and we will later demonstrate that the TFPM basis can reduce the required GPU memory during training. Moreover, since this method reduces the size of the training dataset, it can alleviate the computational and storage burdens associated with increasing dimensionality to some extent.

\subsection{Loss Function}

For networks using two different basis functions, we compute the end-to-end loss in each case. Specifically, for the finite element basis, the loss function is 
$$L(\theta) = \frac{1}{N}\sum_{i = 1}^{N}\Vert \hat{G}_{\theta}(\cdot) - \hat{u}(\cdot) \Vert^{2}_{L^{2}([s,t]^{d})},$$
 and for the TFPM basis, the loss function is 
 $$L(\theta) = \frac{1}{N}\sum_{i = 1}^{N}\Vert \mathcal{R}e \circ \hat{G}_{\theta}(\cdot) - \hat{u}(\cdot) \Vert^{2}_{L^{2}([s,t]^{d})}.$$
 Finally, we present our network architecture in Figure \ref{network}.

\section{Theoretical analysis}

In this section, we primarily conduct a theoretical analysis of the previously proposed network architecture, encoding methods, and properties of the PDE. First, we will prove the continuity of the solution to the Helmholtz equation with respect to the interface and the domain as variables.

\subsection{Continuity of the Helmholtz equation operator}
Here, we consider the Helmholtz equation in the following specific form with $b\in\mathbb{C}$ and Im($b)>0$ :
\begin{equation}
	\begin{split}
		& \Delta u + b^{2}u(x) = f(x), \quad \text{for}\quad x \in \Omega \backslash \Gamma\\
		& u|_{\partial \Omega} = g(x), \\
		& [u]|_{\Gamma} = \alpha, \quad [u']|_{\Gamma} = \beta.
		\label{helmholtz_c}
	\end{split}
\end{equation}
Based on the previous continuity assumption for the operator $\mathcal{G}$, we need to prove that the aforementioned Helmholtz operator is continuous with respect to variations in both the functions and the domain. 

Consider the extended operator:
\begin{equation}
	\hat{\mathcal{G}}: \phi_{\Omega} \times \phi_{\Gamma} \times \hat{b}(x)  \times \hat{f}(x) \times \hat{g}(x) \times \hat{\alpha} \times \hat{\beta} \rightarrow \hat{u}(x),
\end{equation}
the continuity of the operator $\hat{\mathcal{G}}$ with respect to the input functions is straightforward. Here, we focus solely on proving that $\hat{\mathcal{G}}$ is continuous with respect to variations of domain $\Omega$ and the interface $\Gamma$, i.e., $\phi_{\Omega}$ and $\phi_{\Gamma}$.

\begin{theorem}[Continuity of interface under constant interface conditions]
	\label{conti_c}
	For the operator $\hat{\mathcal{G}}$ defined above, assume $\Omega$ is a bounded domain in $\mathbb{R}^{n}$(n = 2 or 3), moreover, all interfaces considered here are smooth closed hypersurfaces within $\Omega$, then given constants $\alpha$ and $\beta$, it follows that $\hat{\mathcal{G}}$ is continuous with respect to variation $\Gamma$. That is, for any prescribed $\Gamma$, $\Gamma_r$, assume that there exists of a transformation $\tau_{r}$ from $\Gamma$ to $\Gamma_{r}$:
	\begin{equation}
		\tau_{r}: x\mapsto x + r(x),
	\end{equation}
	then for arbitrary $\varepsilon > 0$, there exists $\delta > 0$ such that if $\Vert r\Vert_{\infty} < \delta$, then the following inequality holds:
	\begin{equation}
		\big\Vert \hat{\mathcal{G}}(\phi_{\Gamma}, \dots) - \hat{\mathcal{G}}(\phi_{\Gamma_r}, \dots) \big\Vert_{2} < \varepsilon,
	\end{equation}
	here, all other input functions remain unchanged. Since $\alpha$ and $\beta$ are constants, the interface conditions are given by $\hat{\alpha}_{\Gamma}, \hat{\alpha}_{\Gamma_r}$ and $\hat{\beta}_{\Gamma}, \hat{\beta}_{\Gamma_r}$, respectively.
\end{theorem}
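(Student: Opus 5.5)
The plan is to reduce the interface problem with constant jumps to a problem without jumps by subtracting an explicit jump-carrying function, and then compare the two resulting problems on the fixed domain $\Omega$.

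\textbf{Step 1: lifting the jumps.} Let $D=\text{int}(\Gamma)$ and $D_r=\text{int}(\Gamma_r)$. Choose a function $w_\Gamma$ with $w_\Gamma=0$ outside $D$ and $w_\Gamma$ smooth in $\bar D$, such that:
\begin{equation*}
[w_\Gamma]|_\Gamma=\alpha, \qquad [\partial_n w_\Gamma]|_\Gamma=\beta .
\end{equation*}
A concrete choice is $w_\Gamma=\chi_D\,(\alpha+\beta\, d_\Gamma)\,\eta$. Here $d_\Gamma$ is the signed distance to $\Gamma$, smooth in a tubular neighbourhood because $\Gamma$ is a smooth closed hypersurface. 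The function $\eta$ is a cutoff equal to $1$ near $\Gamma$; the sign convention for $d_\Gamma$ is fixed by $\mathbf n$. Because $\alpha,\beta$ are constants, the same formula with $\Gamma_r$ in place of $\Gamma$ defines $w_{\Gamma_r}$. For $\|r\|_\infty$ small the tubular neighbourhoods are uniform, provided $\tau_r$ is smooth and close to the identity in $C^1$; I will state this as an implicit assumption on the family of admissible $\Gamma_r$. Then $v=u-w_\Gamma$ has no jump across $\Gamma$ and solves
\begin{equation*}
\Delta v+b^2 v=f-(\Delta+b^2)w_\Gamma \quad \text{in } \Omega\setminus\Gamma, \qquad v|_{\partial\Omega}=g .
\end{equation*}
The flux jump cancels as well, so $v\in H^1(\Omega)$ is the weak solution of the transmission-free problem. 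Its right-hand side is $F_\Gamma=f-\chi_D(\Delta+b^2)w_\Gamma$, which is bounded uniformly in $r$. Define $v_r$ and $F_{\Gamma_r}$ in the same way.

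\textbf{Step 2: stability of the jump-free problem.} Since $\operatorname{Im}(b^2)\ne0$ (taking $\operatorname{Re} b\neq 0$; otherwise $b^2<0$ and the form is coercive directly), the sesquilinear form $\int\nabla v\cdot\nabla\bar\phi-b^2 v\bar\phi$ is coercive on $H_0^1(\Omega)$. To see this, take real and imaginary parts of $a(v,v)$. This gives
\begin{equation*}
\|v-v_r\|_{H^1}\le C\,\|F_\Gamma-F_{\Gamma_r}\|_{H^{-1}}\le C\,\|F_\Gamma-F_{\Gamma_r}\|_{L^2}.
\end{equation*}
Here $C$ depends only on $b$ and $\Omega$, and not on $\Gamma$.

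\textbf{Step 3: comparing the data.} We have
\begin{equation*}
\|u-u_r\|_{L^2}\le \|v-v_r\|_{L^2}+\|w_\Gamma-w_{\Gamma_r}\|_{L^2},
\end{equation*}
and after zero extension the same bound holds for $\hat u$. Both $F_\Gamma-F_{\Gamma_r}$ and $w_\Gamma-w_{\Gamma_r}$ split into two pieces:
\begin{itemize}
\item[(a)] on $D\cap D_r$, differences of $d_\Gamma$, $d_{\Gamma_r}$ and their derivatives up to order two, multiplied by bounded factors;
\item[(b)] on the symmetric difference $D\triangle D_r$, terms bounded uniformly.
\end{itemize}
Piece (b) contributes at most $C|D\triangle D_r|^{1/2}$. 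Since $D\triangle D_r$ lies in a $\|r\|_\infty$-neighbourhood of $\Gamma$, we get $|D\triangle D_r|\le C|\Gamma|\,\|r\|_\infty$. For piece (a), note that $d_{\Gamma_r}-d_\Gamma=O(\|r\|_\infty)$ uniformly. The second derivatives of $d_{\Gamma_r}$ depend on the curvature of $\Gamma_r$, and they converge to those of $d_\Gamma$ once $\tau_r\to\mathrm{id}$ in $C^2$.

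\textbf{Main obstacle.} The hard step is this curvature term. The statement only controls $\|r\|_\infty$, so convergence of $\Delta d_{\Gamma_r}$ is not automatic. I would first try to avoid it by keeping the flux jump out of $F$ altogether: treat $\beta$ weakly as the surface source $\beta\,\delta_\Gamma$ in $H^{-1}$. Then only the $\alpha$-lift, which is $\alpha\chi_D$ mollified, enters $F$. The difference $\beta(\delta_\Gamma-\delta_{\Gamma_r})$ is tested against $\phi\in H_0^1$ via the pullback by $\tau_r$. This gives a bound of the form $\|r\|_\infty\|\nabla\phi\|_{L^1}$ plus a Jacobian term. That term still requires $\nabla r$ bounded, which I would add as a hypothesis. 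With it, every term is $O(\|r\|_\infty^{1/2})$, and choosing $\delta$ accordingly gives the claimed $\varepsilon$–$\delta$ continuity.
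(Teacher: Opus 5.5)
Your route differs from the paper's. The paper writes $u$ by a Green representation formula and keeps only the interface term, i.e.\ single- and double-layer Helmholtz potentials with densities $\beta$ and $\alpha$. It then appeals to Costabel's shape-differentiability of $r\mapsto\mathcal{P}_r\tau_r^{-1}$ to assert $\|u_\Gamma-u_{\Gamma_r}\|_{L^2}\le C\|r\|_\infty$. You instead lift the jumps and use coercivity of the form for $\operatorname{Im}b>0$ on the fixed domain $\Omega$. This controls the whole solution, whereas the paper's reduction ignores that the Neumann trace $\partial_n u$ in its boundary term also depends on $\Gamma$. Step 2 is sound, as is the estimate $|D\triangle D_r|\le C\|r\|_\infty$ in Step 3, and your diagnosis of the $C^2$ curvature problem is right. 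For the value jump you need no mollification (mollifying would destroy the jump): $\alpha\chi_D$ is itself an admissible lift, since its normal derivative vanishes on both sides. It produces only $\alpha(\chi_D-\chi_{D_r})$ and $b^2\alpha(\chi_D-\chi_{D_r})$, both $O(\|r\|_\infty^{1/2})$ in $L^2$. So for $\beta=0$ your argument proves the statement under exactly its stated hypothesis.

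The gap is the flux term. Write $\langle\delta_\Gamma-\delta_{\Gamma_r},\phi\rangle=\int_\Gamma(\phi-\phi\circ\tau_r)\,dS+\int_\Gamma(\phi\circ\tau_r)(1-J_r)\,dS$. The Jacobian term has size $\|1-J_r\|_\infty$, which is governed by $\nabla_\Gamma r$ and does not become small when $\nabla r$ is merely bounded. Concretely, perturb a circle normally by a smoothed sawtooth of amplitude $\varepsilon$ and slope $\pm1$. Then $\|r\|_\infty\le\varepsilon$ and $\|\nabla r\|_\infty\le 2$, but $|\Gamma_r|\to\sqrt2\,|\Gamma|$ and $\delta_{\Gamma_r}\rightharpoonup\sqrt2\,\delta_\Gamma$. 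Consequently $u_r$ converges in $L^2$ to the solution with flux jump $\sqrt2\,\beta$ instead of $\beta$. Your closing claim that every term is $O(\|r\|_\infty^{1/2})$ is therefore false, and in fact the statement itself fails for $\beta\neq0$ when only $\|r\|_\infty$ is controlled. The hypothesis you need is $\|\nabla r\|_\infty\to0$ (i.e.\ $\tau_r\to\mathrm{id}$ in $C^1$), which gives a bound $O(\|r\|_\infty^{1/2}+\|\nabla r\|_\infty)$ with no curvature involved.

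The first term is also mis-estimated. It is not bounded by $\|r\|_\infty\|\nabla\phi\|_{L^1(\Omega)}$, since a surface integral cannot be traded for $\|r\|_\infty$ times a volume integral. The correct bound is $C\|\nabla\phi\|_{L^1}$ over the thin shell between $\Gamma$ and $\Gamma_r$, writing $\Gamma_r$ as a normal graph over $\Gamma$. Cauchy--Schwarz then gives $C\|r\|_\infty^{1/2}\|\phi\|_{H^1}$.

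The paper's own argument carries the same hidden assumption. Costabel's differentiability is with respect to $r$ small in a $C^k$ norm, so it cannot yield a bound in terms of $\|r\|_\infty$ either.
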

\begin{proof}
	First, based on the transformation $\tau_{r}$, for any function $\psi$ defined on $\Gamma$ and $\psi_{r}$ defined on $\Gamma_{r}$, there exists a transformation $\tau_{r}$ that maps it to $\Gamma_{r}$, along with an inverse transformation, satisfying 
	\begin{equation}
		\tau_{r}(\psi_{r}(x)) = \psi_{r}(x + r(x)), \quad \tau_{r}^{-1}(\psi)(x + r(x)) = \psi(x).
	\end{equation}
	Since both $\alpha$ and $\beta$ are constants here, we have:
	$$\hat{\alpha}_{\Gamma_r} = \widehat{\alpha_{\Gamma}\circ \tau^{-1}_{r}},\quad \hat{\beta}_{\Gamma_r} = \widehat{\beta_{\Gamma}\circ \tau^{-1}_{r}}.$$
	
	Next, we express the solution to the interface problem as an integral function of fundamental solutions of Helmholtz equation $\Phi$, i.e.,
	\begin{equation}
		\begin{split}
			u(x) = \int_{\Omega} \Phi(x,y) f(y)dy + &\int_{\partial\Omega}(\frac{\partial\Phi(x,y)}{\partial n_{y}}g(y) - \Phi(x,y)\frac{\partial u(y)}{\partial n}) dS_{y} + \\
			&\int_{\Gamma}(\Phi(x,y)\beta - \frac{\Phi(x,y)}{\partial n_{y}}\alpha)dS_{y}.
		\end{split}
	\end{equation}
	The three integral terms here can be viewed as the source term, the boundary correction term, and the interface correction term, respectively. Therefore, we only need to focus on the interface correction term related to the interface $\Gamma$, denoted as $ u_{\Gamma} $. Therefore, we only need to prove
	$$\Vert u_{\Gamma} - u_{\Gamma_{r}} \Vert_{2} < \varepsilon.$$
	
	To prove this conclusion, we cite a theorem from the literature by Martin Costabel\cite{costabel2012shape}. First, in Example 2.4, it is shown that the fundamental solution $\Phi$ is pseudo-homogeneous of class $-1$. Based on this, according to Theorem 4.8 in the same reference, the mapping 
	\begin{equation}
		r \mapsto \mathcal{P}_{r}\tau_{r}^{-1}
	\end{equation}
	is infinitely Gâteaux differentiable, and thus continuous. Here, $\mathcal{P}_{r}$ are potential operators defined by 
	\begin{equation}
		(\mathcal{P}_{r}u_{r})(x) = \int_{\Gamma_{r}} k_{r}(y_{r}, x- y_{r})u_{r}(y_{r})ds(y_{r}).
	\end{equation}
	Moreover, according to Example 4.10 in the literature, both the single-layer and double-layer potential functions of the Helmholtz equation belong to the previously defined class $\mathcal{P}_{r}$, and thus are also infinitely Gâteaux differentiable. More specifically, here we define the potential operator as:
	$$(P_{\Gamma_{r}}\phi)(x) = \int_{\Gamma_{r}}(\Phi(x,y)\beta - \frac{\Phi(x,y)}{\partial n_{y}}\alpha)dS_{y}.$$
	
	Finally, since the interface correction part of the solution is composed of single-layer and double-layer potentials, we conclude that 
	\begin{equation}
		\begin{split}
			\big\Vert \hat{\mathcal{G}}(\chi_{\Gamma}, \dots) - \hat{\mathcal{G}}(\chi_{\Gamma_r}, \dots) \big\Vert_{2} = \Vert u_{\Gamma} - u_{\Gamma_{r}} \Vert_{L^{2}(\Omega)} = \Vert (P_{\Gamma_{r}} - P_{\Gamma_{0}} )\phi \Vert_{L^{2}(\Omega)} < C\Vert r\Vert_{\infty}
		\end{split}
	\end{equation}
	for some constant $C$. Thus, if we choose $\delta = \frac{\varepsilon}{C}$ our theorem can be proven.
\end{proof}

\begin{theorem}[Continuity of interface under general interface conditions]
	For the operator $\hat{\mathcal{G}}$ defined above, $\Omega$ is a bounded domain in $\mathbb{R}^{n}$(n = 2 or 3), moreover, all interfaces considered here are smooth closed hypersurfaces within \(\Omega\), assume $\alpha(x) \in H^{1/2}(\Gamma)$ and $\beta(x) \in H^{-1/2}(\Gamma)$, it follows that $\hat{\mathcal{G}}$ is continuous with respect to variation $\Gamma$. That is, for any prescribed $\Gamma$, $\Gamma_r$, assume that there exists of a transformation $\tau_{r}$ from $\Gamma$ to $\Gamma_{r}$:
	\begin{equation}
		\tau_{r}: x\mapsto x + r(x),
	\end{equation}
	then for arbitrary $\varepsilon > 0$, there exists $\delta > 0$ such that if $\Vert r\Vert_{\infty} < \delta$, then the following inequality holds:
	{\footnotesize
		\begin{equation}
			\big\Vert \hat{\mathcal{G}}(\phi_{\Omega}, \phi_{\Gamma}, \hat{b}(x), \hat{f}(x), \hat{g}(x), \hat{\alpha}_{\Gamma}(x), \hat{\beta}_{\Gamma}(x)) - 
			\hat{\mathcal{G}}(\phi_{\Omega}, \phi_{\Gamma_{r}}, \hat{b}(x), \hat{f}(x), \hat{g}(x), \hat{\alpha}_{\Gamma_{r}}(x), \hat{\beta}_{\Gamma_{r}}(x)) \big\Vert_{2} < \delta,
		\end{equation}
	}
	here $ \hat{\alpha}_{\Gamma_{r}}(x), \hat{\beta}_{\Gamma_{r}}(x)$ is defined as follows:
	$$\hat{\alpha}_{\Gamma_r} = \widehat{\alpha_{\Gamma}\circ \tau^{-1}_{r}}, \hat{\beta}_{\Gamma_r} = \widehat{\beta_{\Gamma}\circ \tau^{-1}_{r}}.$$
\end{theorem}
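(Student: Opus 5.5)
The plan follows the proof of Theorem \ref{conti_c}, with the constant densities $\alpha,\beta$ replaced by the transported densities $\alpha\circ\tau_r^{-1}$ and $\beta\circ\tau_r^{-1}$. As there, we use the fundamental solution $\Phi$ of the Helmholtz operator with $\mathrm{Im}(b)>0$ and write the solution as the sum of three parts: a volume potential of $f$, a boundary correction on $\partial\Omega$, and an interface part
\begin{equation*}
u_{\Gamma_r}(x)=\int_{\Gamma_r}\Bigl(\Phi(x,y)\,\beta_r(y)-\frac{\partial\Phi(x,y)}{\partial n_y}\,\alpha_r(y)\Bigr)dS_y ,\qquad \alpha_r=\alpha\circ\tau_r^{-1},\ \beta_r=\beta\circ\tau_r^{-1}.
\end{equation*}
Since $\phi_\Omega,\hat b,\hat f,\hat g$ are unchanged, the volume term is identical for $\Gamma$ and $\Gamma_r$. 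The boundary term does not cancel, however, because the Cauchy data $\partial u/\partial n$ on $\partial\Omega$ depend on $\Gamma$. To handle this, write $u = u^{\mathrm{vol}} + u_{\Gamma} + w$, where $w$ solves the homogeneous Helmholtz equation in $\Omega$ with Dirichlet data $g - (u^{\mathrm{vol}}+u_\Gamma)|_{\partial\Omega}$. The map from Dirichlet data to $w$ is bounded on $H^{1/2}(\partial\Omega)\to L^2(\Omega)$, since $\mathrm{Im}(b)>0$ excludes Dirichlet eigenvalues. It therefore suffices to prove
\begin{equation*}
\Vert u_{\Gamma_r}-u_{\Gamma}\Vert_{L^2(\Omega)}\to0 \quad\text{and}\quad \Vert (u_{\Gamma_r}-u_{\Gamma})|_{\partial\Omega}\Vert_{H^{1/2}(\partial\Omega)}\to0 .
\end{equation*}
The second estimate is easy. $\Gamma$ is a smooth closed hypersurface with $\mathrm{dist}(\Gamma,\partial\Omega)>0$, so for $\Vert r\Vert_\infty$ small both $\Gamma_r$ and $\Gamma$ stay a fixed distance from $\partial\Omega$. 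The kernels are then smooth there, and the smooth-kernel case of the same argument applies.

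The core step is to pull back to the reference surface. By the definition of $\tau_r^{-1}$ acting on functions, $u_{\Gamma_r}=\mathcal{S}_r(\beta)-\mathcal{D}_r(\alpha)$, where
\begin{equation*}
\mathcal{S}_r=\mathcal{P}^{S}_{r}\tau_r^{-1},\qquad \mathcal{D}_r=\mathcal{P}^{D}_{r}\tau_r^{-1}
\end{equation*}
are the transported single- and double-layer potentials, and $\alpha,\beta$ now live on the fixed $\Gamma$. As in the proof of Theorem \ref{conti_c}, Costabel's results apply: $\Phi$ is pseudo-homogeneous of class $-1$ (Example 2.4), and the Helmholtz layer potentials belong to the admissible class (Example 4.10). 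By Theorem 4.8 the maps $r\mapsto\mathcal{S}_r$ and $r\mapsto\mathcal{D}_r$ are Gâteaux differentiable, and in fact $C^\infty$ as maps into bounded operators. The estimate we want is the operator-norm Lipschitz bound
\begin{equation*}
\Vert \mathcal{S}_r-\mathcal{S}_0\Vert_{H^{-1/2}(\Gamma)\to L^2(\Omega)}+\Vert \mathcal{D}_r-\mathcal{D}_0\Vert_{H^{1/2}(\Gamma)\to L^2(\Omega)}\le C\Vert r\Vert,
\end{equation*}
which gives
\begin{equation*}
\Vert u_{\Gamma_r}-u_\Gamma\Vert_{L^2(\Omega)}\le C\Vert r\Vert\bigl(\Vert\alpha\Vert_{H^{1/2}(\Gamma)}+\Vert\beta\Vert_{H^{-1/2}(\Gamma)}\bigr).
\end{equation*}
Choosing $\delta$ proportional to $\varepsilon$ divided by this data norm finishes the proof. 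The mapping properties of the unperturbed potentials, $H^{-1/2}\to H^1_{loc}$ for the single layer and $H^{1/2}\to H^1$ on each side of $\Gamma$ for the double layer, ensure the constants are finite.

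The main obstacle is the norm in which $r$ is small. Costabel's differentiability is stated for perturbations $r$ in $C^k$ or $W^{1,\infty}$, because normals and surface Jacobians involve $\nabla r$. Smallness of $\Vert r\Vert_\infty$ alone does not control $n_{y_r}$, which enters $\mathcal{D}_r$, nor the Jacobian, which matters because $\alpha,\beta$ are only Sobolev functions, not constants. My first move is to read the hypothesis as smallness of $r$ in a $C^1$ norm (the class implicitly used in Theorem \ref{conti_c}) and state this explicitly. If one insists on $\Vert r\Vert_\infty$ only, a fallback exists for the double layer: integrate by parts to express it through $\Phi$ and a surface gradient of $\alpha\in H^{1/2}$, which moves the derivative off the geometry. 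This still needs the Jacobian of $\tau_r$ to stay bounded, so some uniform $C^1$ control of the family is unavoidable. A second, minor point is that the statement concludes with bound ``$<\delta$''. The proof actually yields $<\varepsilon$ once $\delta$ is chosen as above, or equivalently a bound of $C\delta$.
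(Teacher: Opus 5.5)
You follow the paper's route. Its proof just transfers the argument of Theorem~\ref{conti_c}: isolate the interface term, pull it back to $\Gamma$ with $\tau_r^{-1}$, cite Costabel's Example~2.4, Example~4.10 and Theorem~4.8, and end with a bound $C\Vert r\Vert_\infty$. Three of your additions are genuine improvements:
\begin{itemize}
\item The paper discards the $\partial\Omega$ term, although $\partial u/\partial n|_{\partial\Omega}$ depends on $\Gamma$. Your splitting $u=u^{\mathrm{vol}}+u_\Gamma+w$ with a Dirichlet corrector repairs this.
\item Your remark that smallness of $\Vert r\Vert_\infty$ alone is not enough is right.
\item So is your remark on the typo ``$<\delta$''.
\end{itemize}

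\textbf{The gap.} The step that carries your argument fails. The bound $\Vert\mathcal{D}_r-\mathcal{D}_0\Vert_{H^{1/2}(\Gamma)\to L^2(\Omega)}\le C\Vert r\Vert$ is false for any of the norms in play ($\Vert\cdot\Vert_\infty$, $C^1$, $C^2$). The paper's final bound $C\Vert r\Vert_\infty$ is false for the same reason.

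\textbf{Counterexample.} Take $\beta=0$ and $\alpha\equiv1$, so that $\alpha\circ\tau_r^{-1}\equiv1$, with a normal displacement $r=\epsilon\nu$. Let $D$ be the region enclosed by $\Gamma$.
\begin{itemize}
\item The divergence theorem gives $u_\Gamma=-\chi_D+b^2\int_D\Phi(\cdot,y)\,dy$, up to the sign convention for $\Phi$.
\item The second term moves by $O(\epsilon)$ in $L^2(\Omega)$, since $\Phi(\cdot,y)\in L^2(\Omega)$ uniformly in $y$.
\item The first term moves by $\Vert\chi_{D_r}-\chi_D\Vert_{L^2}=\vert D_r\triangle D\vert^{1/2}\asymp\epsilon^{1/2}$.
\end{itemize}
Hence $\Vert u_{\Gamma_r}-u_\Gamma\Vert_{L^2(\Omega)}\asymp\epsilon^{1/2}$. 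Since $w$ moves only by $O(\epsilon)$, the same holds for the full solution. Intuitively, shifting a jump of size $\alpha$ by $\epsilon$ changes $u$ by $O(1)$ on a set of measure $O(\epsilon)$.

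So $r\mapsto\mathcal{D}_r$ is not differentiable, not even Lipschitz, into $\mathcal{L}(H^{1/2}(\Gamma),L^2(\Omega))$. Whatever the exact target space in Costabel's Theorem~4.8, results of this type control potentials in one of two settings: on compact sets at positive distance from the moving surface, or as transported fields $u_{\Gamma_r}\circ T_r$ for a volume extension $T_r$ of $\tau_r$. They never control the untransported difference across a moving discontinuity.

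\textbf{How to repair it.} The theorem itself only asserts continuity, and your own tools prove it. Fix $\eta>0$ and let $U_\eta$ be the $\eta$-neighbourhood of $\Gamma$.
\begin{itemize}
\item Away from $\Gamma$: for $\Vert r\Vert_\infty<\eta/2$ the kernels are smooth on $\Omega\setminus U_\eta$. The smooth-kernel argument you use near $\partial\Omega$ then gives $\Vert u_{\Gamma_r}-u_\Gamma\Vert_{L^2(\Omega\setminus U_\eta)}\to0$.
\item Near $\Gamma$: $u_{\Gamma_r}$ is bounded in $H^1$ on each side of $\Gamma_r$, uniformly in small $r$. This holds because the $\Gamma_r$ have uniform Lipschitz character and the transported data norms are uniformly equivalent.
\item By Sobolev embedding, $u_{\Gamma_r}$ is then uniformly bounded in some $L^p$ with $p>2$. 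Hence $\Vert u_{\Gamma_r}\Vert_{L^2(U_\eta)}\le C\vert U_\eta\vert^{1/2-1/p}$, which is small once $\eta$ is small.
\end{itemize}
This yields continuity with modulus at best $\Vert r\Vert^{1/2}$. So $\delta$ must scale like $\varepsilon^2$, not like $\varepsilon$.

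\textbf{A caution on the norm of $r$.} For $\beta\in H^{-1/2}(\Gamma)$, the density $\beta\circ\tau_r^{-1}$ is defined by duality, which multiplies $H^{1/2}$ test functions by the surface Jacobian of $\tau_r$. For $r\in C^1$ that Jacobian is only continuous, which is not enough. You should take $r$ small in $C^2$, or in $C^{1,s}$ with $s>1/2$, rather than in $C^1$.
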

\begin{proof}
	Note that the only distinction here from Theorem \ref{conti_c} is that $\alpha$ and $\beta$ are no longer constant functions. Since the theorem in Martin Costabel's work\cite{costabel2012shape} already covers this case, it suffices to define $\hat{\alpha}_{\Gamma_{r}}(x), \hat{\beta}_{\Gamma_{r}}(x)$ following the method outlined in the theorem to meet the required conditions. The remaining steps of the proof are identical to those of Theorem \ref{conti_c}.
\end{proof}

\begin{theorem}[Continuity of domain of Dirichlet zero boundary condition]
	For the operator $\hat{\mathcal{G}}$ defined above, assume $\Omega, \Omega_{r}$ are bounded domains in $\mathbb{R}^{n}$(n = 2 or 3)  with boundary $\partial \Omega, \partial \Omega_{r}$ of class $C^{2}$, and $\Gamma$ is smooth closed hypersurfaces within both $\Omega, \Omega_{r}$. Then if $f(x)$ is a continuous function defined on $ \Omega \cup \Omega_r $, assume there exists a  transformation from $\Omega$ to $\Omega_{r}$ with $\tau_{r}(x) = x + r(x)$, and assume that $\tau_{r}$ is a bijection with $r\in C^{2}$, then for any $\varepsilon > 0$, there exists $\delta > 0$ such that if $\|r\|_{C^{2}} <\delta$, the following holds:
	\begin{equation}
		\big\Vert \hat{\mathcal{G}}(\phi_{\Omega}, \phi_{\Gamma}, \hat{f_{|\Omega}}(x), \dots) - \hat{\mathcal{G}}(\phi_{\Omega_r}, \phi_{\Gamma},  \hat{f_{|\Omega_{r}}}(x), \dots) \big\Vert_{2} < \varepsilon,
	\end{equation}
	here, all other input functions remain unchanged, with $g_{\Omega} = 0, g_{\Omega_{r}} = 0$, and the interface conditions are both set to 0.
\end{theorem}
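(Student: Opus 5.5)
We propose to prove the statement by pulling the perturbed problem on $\Omega_r$ back to the fixed domain $\Omega$ through $\tau_r$, and then using a perturbation argument for the resulting variational problem.

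\textbf{Setup on each domain.} With $g=0$ and zero interface data, the interface conditions reduce to continuity of $u$ and of $\nabla u\cdot\mathbf n$ across $\Gamma$. So $u$ is the $H^1_0$ weak solution of $\Delta u+b^2u=f$ on the whole domain, and $\Gamma$ plays no role. Because $\operatorname{Im}(b)>0$, the sesquilinear form
\begin{equation*}
a(u,v)=\int \nabla u\cdot\nabla\bar v-b^2u\bar v
\end{equation*}
is coercive on $H^1_0$ after multiplying by a suitable unimodular constant. Lax--Milgram then gives unique solutions $u\in H^1_0(\Omega)$ and $u_r\in H^1_0(\Omega_r)$, with $\|u\|_{H^1}\le C\|f\|_{L^2}$. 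The constant $C$ depends only on $b$ and on a uniform bound for the diameters.

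\textbf{Pull-back.} Let $\tilde u_r=u_r\circ\tau_r\in H^1_0(\Omega)$. It solves
\begin{equation*}
a_r(\tilde u_r,v)=\int_\Omega A_r\nabla\tilde u_r\cdot\nabla\bar v-b^2 J_r\tilde u_r\bar v=\int_\Omega (f\circ\tau_r)J_r\bar v ,
\end{equation*}
where $J_r=\det(I+Dr)$ and $A_r=J_r(I+Dr)^{-1}(I+Dr)^{-T}$. For $\|r\|_{C^1}$ small we have $\|A_r-I\|_\infty+\|J_r-1\|_\infty\le C\|r\|_{C^1}$, so $a_r$ is uniformly coercive. Subtracting the two variational problems and testing with $\tilde u_r-u$ gives
\begin{equation*}
\|\tilde u_r-u\|_{H^1(\Omega)}\le C\big(\|r\|_{C^1}\|f\|_{L^2}+\|f\circ\tau_r\,J_r-f\|_{L^2(\Omega)}\big).
\end{equation*}
The last term tends to $0$ as $\|r\|_{C^2}\to0$ by uniform continuity of $f$; here we assume $f$ is uniformly continuous, for instance continuous on the closure of $\Omega\cup\Omega_r$ taken inside $[s,t]^d$.

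\textbf{Back to the extended functions on $[s,t]^d$.} We split
\begin{equation*}
\|\hat u-\hat u_r\|_{L^2([s,t]^d)}\le \|\hat u-\widehat{\tilde u_r}\|_{L^2}+\|\widehat{\tilde u_r}-\hat u_r\|_{L^2}.
\end{equation*}
The first term is controlled by the previous step. For the second, we extend $u_r$ by zero to $U_r\in H^1_0([s,t]^d)$, with $\|U_r\|_{H^1}\le C\|f\|_{L^2}$. Then
\begin{equation*}
\|U_r\circ\tau_r-U_r\|_{L^2}\le \|r\|_\infty\|\nabla U_r\|_{L^2}
\end{equation*}
by the usual difference-quotient estimate along segments. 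To use this we need $\tau_r$ extended to a diffeomorphism of a neighbourhood, with $r$ extended by a $C^2$ extension of comparable norm. Both terms are then $O(\|r\|_{C^2})+o(1)$, so we can choose $\delta$ for any given $\varepsilon$.

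\textbf{Main obstacle.} The hardest part is the last step. The $L^2$ comparison of $U_r\circ\tau_r$ with $U_r$ requires $\tau_r$ to be defined on a neighbourhood of $\Omega\cup\Omega_r$ with Jacobian close to the identity. It also requires the zero extension to stay in $H^1$, which holds because $u_r$ vanishes on $\partial\Omega_r$; the $C^2$ boundary regularity is what makes these steps safe. If an extension of $r$ is not available, we would instead bound $\|\hat u_r\|_{L^2(\Omega_r\triangle\Omega)}$ directly. Near the boundary $|u_r|\le C\,\mathrm{dist}$, using $H^2$ regularity on $C^2$ domains, and the measure of $\Omega_r\triangle\Omega$ is $O(\|r\|_\infty)$, which again gives smallness.
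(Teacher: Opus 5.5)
Your proof is correct. It shares the paper's basic idea of transporting the problem on $\Omega_r$ back to $\Omega$ through $\tau_r$, but it differs in two substantive places.

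First, the paper does not prove stability of the transported solution itself. It cites J.~Simon's Theorem~3.3 for differentiability of $r\mapsto u(r)\circ(I+r)$ into $H^1(\Omega)$, extracts a bound $M\|r\|_{C^2}$, and passes to $L^2$ by embedding. You instead derive the estimate from uniform coercivity of the pulled-back form $a_r$, using $\operatorname{Im} b>0$ and a unimodular rotation. This needs only $\|r\|_{C^1}$ small and gives $O(\|r\|_{C^1})+o(1)$. That $o(1)$ term is the right form for merely continuous $f$: a Lipschitz or differentiable bound via Simon's theorem needs $r\mapsto (f\circ\tau_r)J_r$ to be differentiable into $L^2$, which calls for something like $f\in H^1$.

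Second, and more importantly, the paper ends by writing $\|\hat u(r)-\hat u(0)\|_{L^2}=\int_\Omega|u(r)\circ(I+r)-u(0)|^2\,dx$. This is not an identity: the zero extensions are compared at the same point $x$, not at $\tau_r(x)$ versus $x$, and a norm is equated with a squared norm. Your split $\|\hat u-\widehat{\tilde u_r}\|+\|\widehat{\tilde u_r}-\hat u_r\|$ supplies exactly the missing step. It uses the composition estimate $\|U_r\circ T-U_r\|_{L^2}\le C\|r\|_\infty\|\nabla U_r\|_{L^2}$ for a global extension $T=I+Er$, which is available because $\partial\Omega\in C^2$ and is automatic in Simon's framework, where perturbations are defined on all of $\mathbb{R}^n$. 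This turns the transported estimate into the stated bound on $\hat{\mathcal{G}}$. So the paper's route buys brevity and, under stronger data, differentiability; yours is self-contained and actually complete.

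Two minor points. The zero extension of an $H^1_0(\Omega_r)$ function lies in $H^1(\mathbb{R}^n)$ for any open $\Omega_r$, so no boundary regularity is used there. Your fallback is unnecessary, since the extension of $r$ exists under the hypotheses, and as sketched it is incomplete. Controlling $\hat u_r$ on $\Omega_r\triangle\Omega$ does not handle $u_r\circ\tau_r-u_r$ on $\Omega\cap\Omega_r$. Also, $H^2$ regularity does not give $|u_r|\le C\,\mathrm{dist}(\cdot,\partial\Omega_r)$ in dimensions $2$ and $3$. On the symmetric difference itself, $\|U_r\|_{L^2(\Omega_r\triangle\Omega)}\le\|U_r\|_{L^6}\,|\Omega_r\triangle\Omega|^{1/3}$ together with $|\Omega_r\triangle\Omega|=O(\|r\|_\infty)$ would suffice.
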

\begin{proof}
	
	First, we verify some properties of the Helmholtz operator and the domain mapping.
	
	For the Helmholtz operator $A(u) = -\Delta u - k^{2}u$, since the Helmholtz operator is a linear operator, its differentiability with respect to $u$ in the weak sense is trivial. Moreover, since we adopt the same definition of $ r $ as in Example 6.2 of J. Simon's paper \cite{simon1980differentiation}, it follows from the proofs therein that the remaining conditions in Theorem 3.3 are also satisfied.
	
	Next, we further cite Theorem 3.3 from J. Simon's paper, $r \mapsto u(r) \circ (I+r)$ is differentiable. And from this, we can prove that:
	\begin{equation}
		\begin{split}
			&\big\Vert \hat{\mathcal{G}}(\chi_{\Omega}, \hat{f_{|\Omega}}(x),\dots) -  \hat{\mathcal{G}}(\chi_{\Omega_r}, \hat{f_{|\Omega_{r}}}(x),\dots) \big\Vert_{2}\\
			=& \Vert \hat{u}(r) - \hat{u}(0)\Vert_{L^{2}}\\
			=& \int_{\Omega} |u(r) \circ (I+r) - u(0) |^2  dx .
		\end{split}
	\end{equation}
	Based on the properties of the mapping  $r \mapsto u(r) \circ (I+r)$, there exists a constant $M>0$ such that:
	$$\|u(r) \circ (I+r) - u(0)\|_{H^1(\Omega)} \leq M \|\theta\|_{C^2},$$
	Using the continuous embedding properties of Sobolev spaces, we have:
	$$\|u(r) \circ (I+r) - u(0)\|_{L^2(\Omega)} \leq M' \|\theta\|_{C^2}.$$
	Then, the inequality becomes:
	$$\int_{\Omega} |u(r) \circ (I+r) - u(0) |^2  dx = \|u(r) \circ (I+r) - u(0)\|_{L^2(\Omega)}^2 \leq (M')^2 \|\theta\|_{C^2}^2.$$
	
	Therefore, by choosing $\delta = \frac{\sqrt{\varepsilon}}{M'}$, the proof is complete.

\end{proof}

\subsection{Domain encoding analysis}
As we mentioned earlier, we will analyze the error in encoding the domain and interface based on the characteristic function and the signed distance function (SDF). To simplify the problem, we consider the following scenario: assume the domain $\Omega$ is the rectangle $[0, 1]^2$, and the interface $\Gamma$ is the boundary of a closed region within the domain. For the encoding of this domain, we analyze the errors of two encoding methods. Here, we assume that a uniform $n \times n$ grid is used for encoding in both cases, and the extension to higher dimensional cases is similar.

\subsubsection{Error of characteristic function domain encode}

First, the exact representation of the domain $\Omega$ is given by the characteristic function $\chi_{\Omega}$. Based on this, we also define a piecewise constant approximation function $grid_{\Omega}$ through grid point sampling.  

Thus, we have:
\begin{equation}
	E_{RMSE} = \sqrt{ \int_{\Omega} | \chi_{\Omega}(x) - {grid}_{\Omega}(x) |^2 dx}.
\end{equation}
Since the characteristic function can only take the values 0 or 1, the value of the integral here strictly equals the area of the geometric discrepancy, i.e.,
$$\text{MSE} = \text{Area}( \Omega_{true} \oplus \Omega_{grid} ).$$

Considering that $\Omega$ is the boundary of an interior region, the area discrepancy here can only arise from the grid cells intersected by the boundary of $\Omega$, denote as $M_{boundary}$.

First, we have the following estimate:
$$M_{boundary} \le N_{crossings}^{(x)} + N_{crossings}^{(y)} + C_{start},$$
here, $ N_{\text{crossings}}^{(x)} $ represents the number of times the curve crosses the vertical grid lines, $ N_{\text{crossings}}^{(y)} $ the number of times it crosses the horizontal grid lines, and $ C_{\text{start}} $ the cell containing the starting point of the curve. 

Assume the curve is given by the parametric equations $ (x(s), y(s)) $, and the length of $\partial\Omega$ is $L$, we estimate each of these  components separately:
\begin{equation}
	\begin{split}
		&N_{crossings}^{(x)} \le \lceil \frac{\int_0^L |x'(s)| ds}{1/n-1} \rceil = \lceil (n-1) \int_0^L |\cos \theta(s)| ds \rceil,\\
		&N_{crossings}^{(y)} \le \lceil \frac{\int_0^L |y'(s)| ds}{1/n-1} \rceil = \lceil (n-1) \int_0^L |\sin \theta(s)| ds \rceil,\\
		&M_{boundary} \le \lceil (n-1) \int_0^L \left( |\cos \theta(s)| + |\sin \theta(s)| \right) ds \rceil + C,
	\end{split}
\end{equation}
Considering the worst-case scenario for vertices and the maximum value of the kernel $ \left( |\cos \theta| + |\sin \theta| \right) $ is $\sqrt{2} $, we take $ C = 4 $, leading to the following:
$$M_{boundary} \le \lceil \sqrt{2} \cdot L \cdot (n-1) \rceil + 4.$$

From this, we calculate the previous $ E_{\text{RMSE}} :$
\begin{equation}
	E_{RMSE} \leq \sqrt{\frac{\lceil \sqrt{2} \cdot L \cdot (n-1) \rceil + 4}{(n-1)^2}}\propto n^{-\frac{1}{2}}.
\end{equation}

\subsubsection{Error of SDF encode}

We still adopt the same notation as above, the only difference is that we use \(\phi\) to denote the SDF. then
\begin{equation}
	E_{RMSE} = \sqrt{ \int_{\Omega} | \phi_{\Omega}(x) - {grid}_{\Omega}(x) |^2 dx},
\end{equation}
here $grid_\Omega$ is a function based on bilinear interpolation of grid values from SDF.

Similarly, for each grid cell, we first categorize them into two types: smooth cells and singular cells. Cells that do not contain the medial axis are considered smooth cells, while the rest are singular cells. First, we focus on the smooth cell portion.

For a smooth cell, we first define local coordinates $ x, y \in [0, h]$ within the cell. Then, the true Signed Distance Function $\phi_{\Omega}$ is expanded using a second-order Taylor series around the bottom-left corner $(0, 0)$ of the cell, yielding:
$$\phi_{\Omega}(x, y) =\phi_{\Omega}(0,0) + x \phi_x + y \phi_y + xy \phi_{xy}+ \frac{1}{2}x^2 \phi_{xx} + \frac{1}{2}y^2 \phi_{yy} + O(h^3).$$
Meanwhile, the encoded function $grid_{\Omega}$, after bilinear interpolation, takes the following form within each cell:
$$grid_{\Omega}(x, y) = A + Bx + Cy + Dxy.$$
Thus, for the function error within each cell, we have:
\begin{equation}
	\begin{split}
		E(x, y) = \phi_{\Omega}(x, y) - grid_{\Omega}(x, y) &= \frac{1}{2}x^2 \phi_{xx}(\xi) + \frac{1}{2}y^2 \phi_{yy}(\eta) + O(h^3) \\
		&\leq  \frac{h^2}{2} \left( |\phi_{xx}| + |\phi_{yy}| \right) + O(h^3).
	\end{split}
\end{equation}

For the singular cells, if the grid is sufficiently fine, the portion of the medial axis intersected by each grid cell can be approximated as a straight line. Thus, we simplify the interior of each singular cell as follows: assume the local coordinates of the grid cell are $[-\frac{h}{2}, \frac{h}{2}]^2$, and the equation of the medial axis within the cell is $n_x x + n_y y + d = 0$. We introduce normalized coordinates $u, v \in [-1/2, 1/2]$ such that $x = h u$ and $y = h v$.

Thus, within this cell, the SDF function becomes:  
$$\phi_{\Omega}(x, y) = |n_x x + n_y y + d| = h \cdot |n_x u + n_y v + \frac{d}{h}|$$ 
i.e., $\phi_{\Omega}(x, y) = h \cdot \hat{\phi_{\Omega}}(u, v)$. Denoting the bilinear interpolation operator as $\Pi$, the encoded function is:  
$$grid_{\Omega}(x, y) = \Pi(\phi_{\Omega}) = h \cdot \Pi(\hat{\phi_{\Omega}}(u, v)).$$
Therefore, we calculate the integral over a single singular grid cell as follows:
\begin{equation}
	\begin{split}
		\iint_{\Omega_{cell}} (\phi - \tilde{\phi})^2 dx dy& = \iint_{-1/2}^{1/2} (h \hat{\phi} - h \Pi \hat{\phi})^2 \cdot (h^2 du dv)\\
		&= h^4 \cdot \underbrace{\iint_{-1/2}^{1/2} \left( \hat{\phi}_{\delta}(u,v) - \Pi \hat{\phi}_{\delta}(u,v) \right)^2 du dv}_{C(\theta, \delta)},
	\end{split}
\end{equation}
here, $ C(\theta, \delta) $ is an integral value independent of $ h $.

Next, we estimate the overall error across the entire domain. Assuming the medial axis length of the internal interface $\Omega$ is $ L$, based on the estimation from the characteristic function part, the number of grid cells it passes through $ M_{singular} $ satisfies:
$$M_{singular} \le \lceil \sqrt{2} \cdot L \cdot (n-1) \rceil + 4 = M_{max}.$$

Since the error contributed by singular grid cells is greater than that of smooth grid cells, we have the following estimate:
{\footnotesize \begin{equation}
	\begin{split}
		\|E\|_{L^2}^2 & = \int_{\Omega} E^2 dx dy  = \sum_{i \in \text{Singular}} \int_{\Omega_i} E^2 + \sum_{j \in \text{Smooth}} \int_{\Omega_j} E^2 \\
		& \leq  M_{max} \cdot h^{4} \cdot max\{ C(\theta, \delta) \} + ((n-1)^{2} - M_{max}) \cdot max\{(\frac{ |\phi_{xx}| + |\phi_{yy}|}{2})^{2} \} \cdot h^{4} \cdot h^{2}\\
		& = C_{1} (n-1)^{-3} + C_{2} (n-1)^{-4}. 
	\end{split}
\end{equation}}

Finally, we obtain:
$$E_{RMSE} = \sqrt{ \int_{\Omega} | \phi_{\Omega}(x) - {grid}_{\Omega}(x) |^2 dx}\propto n^{-\frac{3}{2}}$$

From this, we can see that when representing a domain with a single interface, the SDF offers higher accuracy. However, in certain special cases—such as when the interface aligns perfectly with a standard grid or when multiple interfaces are present—the characteristic function representation proves to be more advantageous.

\section{Experiments}

Finally, we conducted a series of experiments to validate the proposed method, demonstrating its generality and accuracy. 

\subsection{Change the external region.}

First, we consider the scenario where only the domain $\Omega$ has been changed. Here, we let $\Omega$ be a varying star-shaped domain, while the interface is fixed at $x = 0$. We examine the following Poisson equation interface problem:
\begin{equation}
	\begin{split}
		-\Delta u &= f(x), \quad \text{for}\quad x \in \Omega \backslash \Gamma\\
		u|_{\partial \Omega}& = 0, \\
		[u]|_{\Gamma}& = \alpha, \quad [au']|_{\Gamma} = \beta.
		\label{poisson}
	\end{split}
\end{equation}

We constructed a dataset consisting of $N = 4000$ samples. Each sample $i$ contains a pair of geometric, source and interface parameters, denoted as $(\mathbf{r}^{(i)}, \mathbf{f}^{(i)}, \mathbf{p}^{(i)}, \mathbf{u}^{(i)})$.

\begin{itemize}
	\item \textbf{Geometry}: The domain $\Omega^{(i)}$ is a star-shaped region within $[-0.5, 0.5]^2$. It is parameterized by $M=100$ boundary points sampled in polar coordinates, represented by the radial vector $\mathbf{r}^{(i)} \in \mathbb{R}^{100}$.
	\item \textbf{Source functions}: The source function is a random function generated by a Gaussian process on $[-0.5, 0.5]^2$. It is parameterized using a standard $100 \times 100$ grid, resulting in a matrix $ \mathbf{f}^{(i)} \in \mathbb{R}^{100 \times 100} $.
	\item \textbf{Interface conditions}: The jump conditions are scalar values $\beta^{(i)}$, independently and uniformly sampled from the interval $[0, 0.5]$.
	\item \textbf{Reference solution}: The ground truth solution $u^{(i)}$ is obtained by solving the governing equation using FEniCS. This field serves as the target for calculating the training loss.
\end{itemize}

\textbf{Grid Encoding}: To facilitate processing by the neural operator, we map the raw boundary data onto a uniform $100 \times 100$ Cartesian grid over the domain. We employ characteristic function encoding to represent interface parameters, and finally resulting in an input tensor of size $3\times 100\times100$.

\textbf{Network parameter settings}: The FNO architecture is configured with a depth of $L=5$ layers. Within each Fourier layer, the width of the feature map is set to $d_v = 64$. We apply a low-pass filter in the frequency domain by truncating the spectrum at $k_{max} = 12$ modes.

\textbf{Training Details}: The network parameters are optimized using the Adam algorithm. We set  learning rate to be $\eta = 10^{-3}$ and employ a batch size of $N_b = 5$. The model is trained for 2000 epochs until convergence.

\begin{table}[t]
	\centering
	\begin{tabular}{l c c}
		& Normal derivative jump & function value jump \\ 
		Our method & \textbf{1.41e-2} & \textbf{6.43e-2} \\
		Geo-FNO & 5.14e-2 & 19.94e-2 \\
	\end{tabular}
	\caption{The comparison with the deformation-based method Geo-FNO at randomly generated non-training grid. It can be observed that, our method achieves better performance than Geo-FNO at randomly generated non-training grid points.}
	\label{compare}
\end{table}

\begin{figure}[t]
	\centering
	\includegraphics[width=1.0\textwidth]{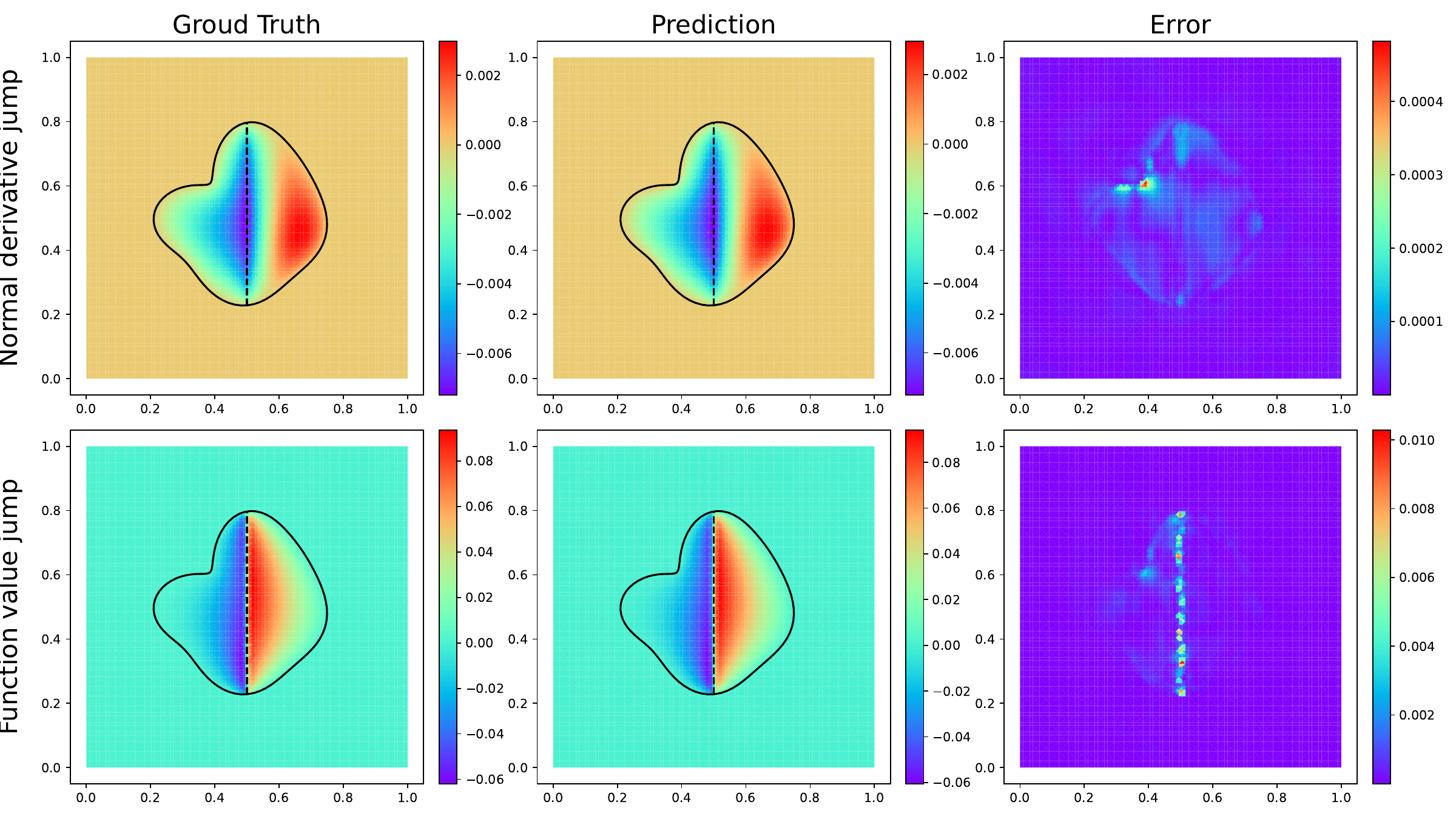}
	\caption{Prediction results when only the external region is varied. The relative $ L^2 $ errors for the two experiments are 1.07\% and 2.69\%, respectively}
	\label{experiment1}
\end{figure}

We also tested cases where the interface function value jump is non-zero. For this discontinuous scenario, aside from fixing the source term $ f $ and setting $ \beta = 0 $ with $ \alpha $ chosen uniformly at random, all other settings remained identical to the previous ones.

We present the results of this experiment in Figure \ref{experiment1}. The first row corresponds to the case with only a jump in the normal derivative, while the second row corresponds to the case with a jump in the function value. The relative $ L^2 $ errors for the two experiments are 1.07\% and 2.69\%, respectively.

Finally, we conducted an additional comparative experiment, comparing our method with the deformation-based Geo-FNO. The experimental results are presented in Table \ref{compare}. Since the original training grids of the two methods are different, we consider the predicted values on another set of randomly generated grid points. For both methods, bilinear interpolation is used to obtain the values at non-training grid points. It can be seen that on the random grid points, our method performs significantly better than Geo-FNO.

\subsection{Change the interface.}

In this experiment, we consider an example where only the internal interface is varied. Here, we set the domain $\Omega$ as $[0,1]^2$, with the internal interface being a star-shaped region that changes within $[0,1]^2$. We still consider the Poisson equation shown in Equation \eqref{poisson}.

We constructed a dataset consisting of $N = 4000$ samples. Each sample $i$ contains a pair of geometric configurations and interface parameters, denoted as $(\mathbf{r}^{(i)}, \mathbf{p}^{(i)}, u^{(i)})$.

\begin{itemize}
	\item \textbf{Geometry}: The interface $\Gamma^{(i)}$ is a star-shaped region within $[0, 1]^2$. It is parameterized by $M=100$ boundary points sampled in polar coordinates, represented by the radial vector $\mathbf{r}^{(i)} \in \mathbb{R}^{100}$.
	\item \textbf{Interface parameters}: The jump conditions consist of two scalar values, $\alpha^{(i)}$ and $\beta^{(i)}$, which are independently and uniformly sampled from the interval $[0, 0.1]$. Collectively, we denote the interface vector as $\mathbf{p}^{(i)} = [\alpha^{(i)}, \beta^{(i)}]$.
	\item \textbf{Reference solution}: The ground truth solution $u^{(i)}$ is obtained by solving the governing equation using FEniCS. 
\end{itemize}

\textbf{Grid Encoding}: To facilitate processing by the neural operator, we map the raw boundary data onto a uniform $100 \times 100$ Cartesian grid over the domain. We tested both encoding methods—the characteristic function and the SDF—and will present a comparison of their results.
This results in an input tensor of size $3 \times 100 \times 100$.

\begin{figure}[t]
	\centering
	\includegraphics[width=1.0\textwidth]{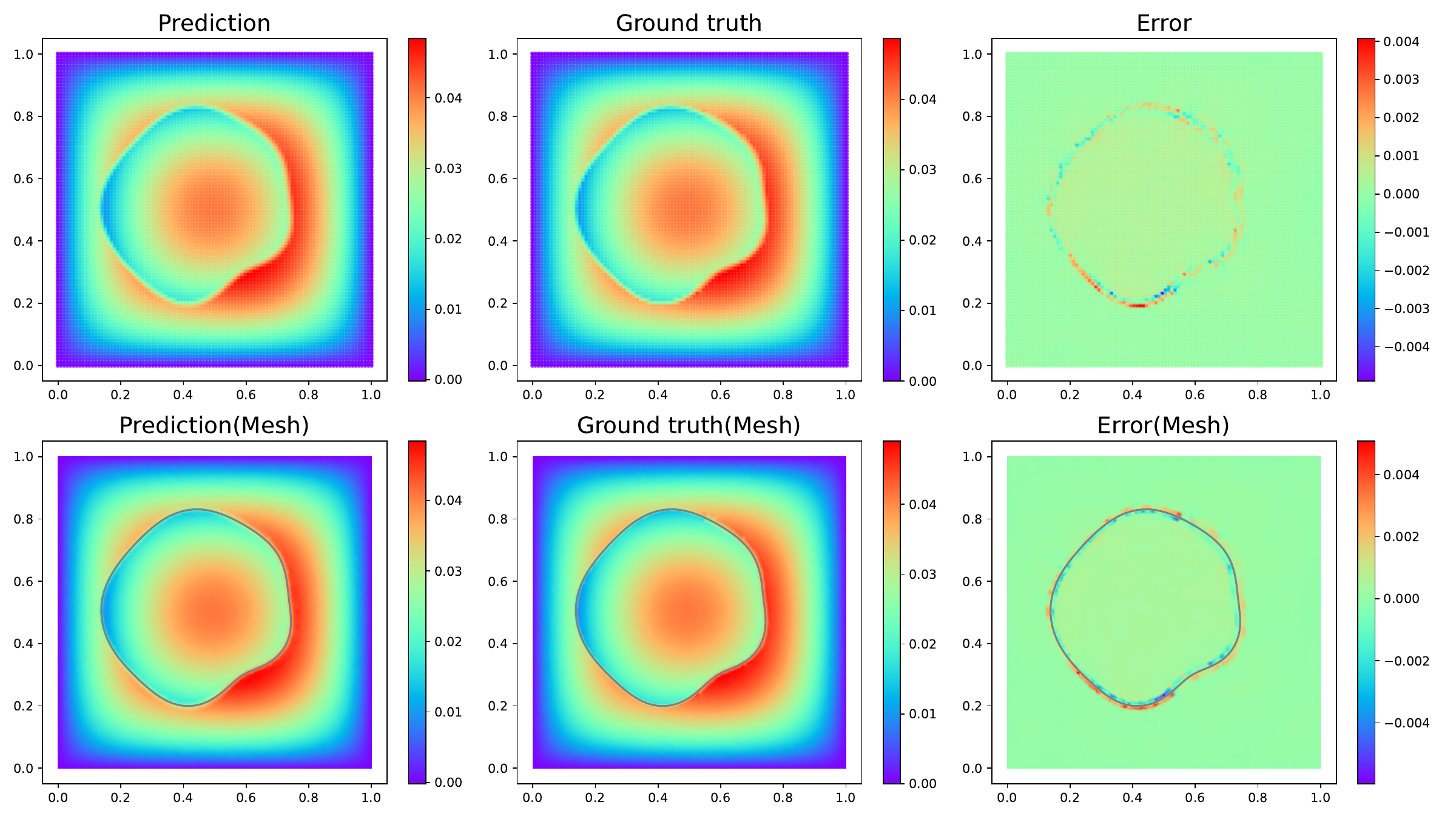}
	\caption{Prediction results for the star-shaped internal interface. The relative $ L^2 $ error for this experiment is 2.20\%.}
	\label{squarepolar}
\end{figure}

In this experiment, the network and training parameters are all identical to those in the previous experiment, so they are not listed separately. We present the prediction results for this experiment in Figure \ref{squarepolar}, where the first row shows the grid point plot and the second row shows the triangular mesh plot. The relative $ L^2 $ error for this experiment is 2.20\%.

\subsection{2D TFPM basis}

In the first two experiments, we separately verified that our method can effectively handle variations in both internal interfaces and external domains. Here, we will conduct a two-dimensional experiment to demonstrate the superiority of the TFPM basis-based approach.

We consider the Helmholtz equation \ref{interfacepde}. For simplicity, we set $a = b = 1 $, $ f(x) = 1 $, $ g(x) = 0 $. The solution domain $\Omega$ is $[0,1]^2$, and the interface $\Gamma$ is a small square with side length 0.4 inside $\Omega$. In this experiment, we prescribe the interface conditions as $\alpha = 0.02$ and $\beta = 0.02$, while allowing the interface geometry $\Gamma$ to vary.

To validate the superiority of the TFPM basis, we constructed a dataset consisting of $N = 2000$ samples. Each sample $i$ contains a pair of geometric configurations and interface parameters, denoted as $(\Omega^{(i)}, \mathbf{p}^{(i)}, u^{(i)})$.

\begin{table}[t]
	\centering
	\begin{tabular}{l c c c}
		& Square($50 \times 50$) & Square($20 \times 20$) & 3D($21 \times 21 \times 21$) \\ 
		FEM\_basis  & 2.65e-2            & 9.81e-2            & 1.26e-2           \\ 
		TFPM\_basis & \textbf{5.23e-3}            & \textbf{4.60e-2}            & \textbf{6.04e-3}           \\ 
	\end{tabular}
	\caption{$L^2$ Error of different basis}
	\label{comparetfpm}
\end{table}

\begin{itemize}
	\item \textbf{Geometry}: The interface $\Gamma^{(i)}$ is small square with side length 0.4 inside $[0, 1]^2$. It is explicitly parameterized by the coordinates of its four vertices, denoted as $\mathbf{v}^{(i)} = \{(x_k, y_k)\}_{k=1}^4$.
	\item \textbf{Interface parameters}: The jump conditions consist of two scalar values, $\alpha^{(i)}$ and $\beta^{(i)}$, which are independently and uniformly sampled constants. Collectively, we denote the interface vector as $\mathbf{p}^{(i)} = [\alpha^{(i)}, \beta^{(i)}]$.
	\item \textbf{Reference solution}: The ground truth solution $u^{(i)}$ is obtained by solving the governing equation using TFPM. 
\end{itemize}

\textbf{Multi-resolution Downsampling}: To demonstrate the resolution-inde\-pendence and superiority of the TFPM basis, we perform systematic downsampling on the raw $100 \times 100$ data. By selecting grid points at fixed strides (subsampling), we generate three additional low-resolution datasets with grid sizes of $50 \times 50$, $20 \times 20$, and $10 \times 10$, respectively.

\begin{figure}[t]
	\centering
	\includegraphics[width=0.9\textwidth]{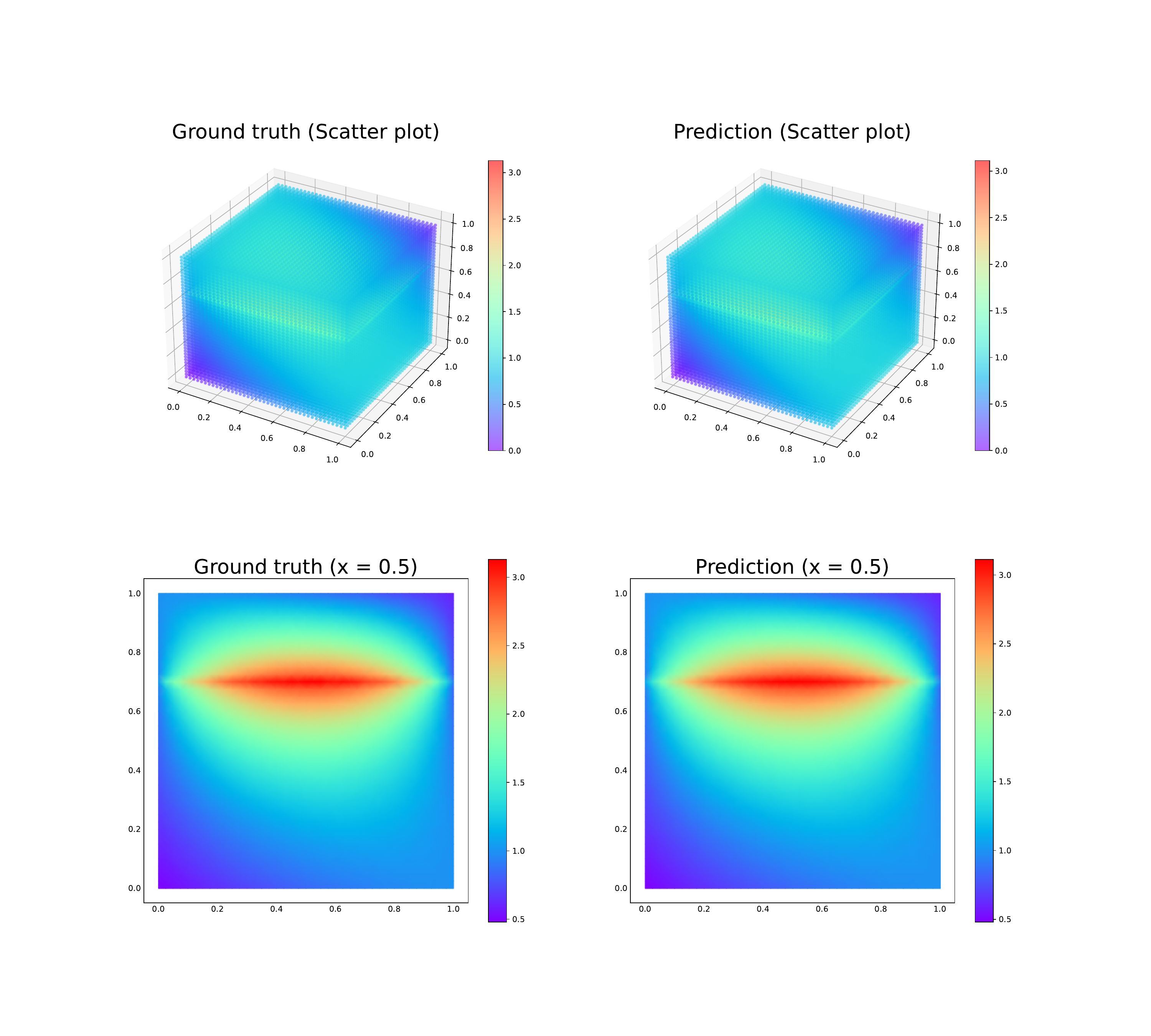}
	\caption{Prediction of 3D Example on TFPM basis. First row: 3D grid point distribution map; Second row: Cross-sectional view at $x = 0.5$.}
	\label{3d}
\end{figure}

\textbf{Grid Encoding}: To facilitate processing by the neural operator, we map the raw boundary data onto a uniform $m \times m$ (where $m < 100$) Cartesian grid over the domain. And we use the signed distance function (SDF) to encode the domain.

\textbf{Output Reconstruction}: In the standard FNO architecture, the spatial resolution remains consistent; an input grid of size $m \times m$ (where $m < 100$) yields a raw prediction on the identical $m \times m$ grid. To evaluate the solutions at the original high resolution, we apply two distinct upsampling strategies: for the finite element basis method, we recover the resolution through standard bilinear interpolation, whereas for the TFPM approach, we explicitly compute the fine grid values within each coarse cell utilizing Equation \eqref{tfpm}.

In this experiment, the network and training parameters are all identical to those in the previous experiment, so they are not listed separately. 

Instead of presenting the results of this experiment separately, we have included them together with the subsequent 3D experiment in Table \ref{comparetfpm}. It can be observed that in the 2D case, replacing the basis with the TFPM basis leads to a significant improvement in prediction performance.

\subsection{3D Example}

Next, to more clearly demonstrate the advantages of the TFPM basis and illustrate that our network can handle high-dimensional cases, we present a three-dimensional experiment. Here, we again consider the Helmholtz equation as in \eqref{tfpm}.

In this experiment, we set $ b = 1.0 $, $ \alpha = 0.2 $, $ \beta = 0 $, $ g(x) = \sin(x + y + z) $, and the interface is defined as $z = \kappa $, where $\kappa$ is a random constant between $ 0.2 $ and $ 0.8 $.

To evaluate the model's capability in handling moving interfaces, we constructed a dataset consisting of $N = 2000$ samples. Out of the total datasets, we randomly partitioned 1000 samples for training and 1000 for testing. Each sample $i$ is characterized by its specific interface configuration, denoted as $(\Omega^{(i)}, u^{(i)})$.
\begin{itemize}
	\item \textbf{Geometry}: The moving interface $\Gamma^{(i)}$ within the domain is explicitly defined by the equation $z = \kappa^{(i)}$. The location of the interface is governed by a single scalar value $\kappa^{(i)}$, which is a random constant independently and uniformly sampled from the interval $[0.2, 0.8]$.
	\item \textbf{Reference solution}: The ground truth solution $u^{(i)}$ is obtained by solving the governing equation using FEniCS. 
\end{itemize}

In this experiment, the network, training parameters and output reconstruction methods are all identical to those in the 2D TFPM basis experiment, so they are not listed separately. We present the experimental results in Figure \ref{3d}. The first row shows the 3D scatter plots, while the second row displays a 2D cross-sectional view. The $L^{2}$ errors for this experiment are listed in the third column of Table \ref{comparetfpm}. In the 3D case, we reduced the geometry term input resolution from $2000 \times 41 \times 41 \times 41$ to $2000 \times 21 \times 21 \times 21$. With a coarser grid, the TFPM-based results show a significant improvement over the baseline.

\subsection{Transport kinetics across interfaces}

Finally, we simulated a problem with a specific physical background.

\begin{figure}[t]
	\centering
	\includegraphics[width=0.8\textwidth]{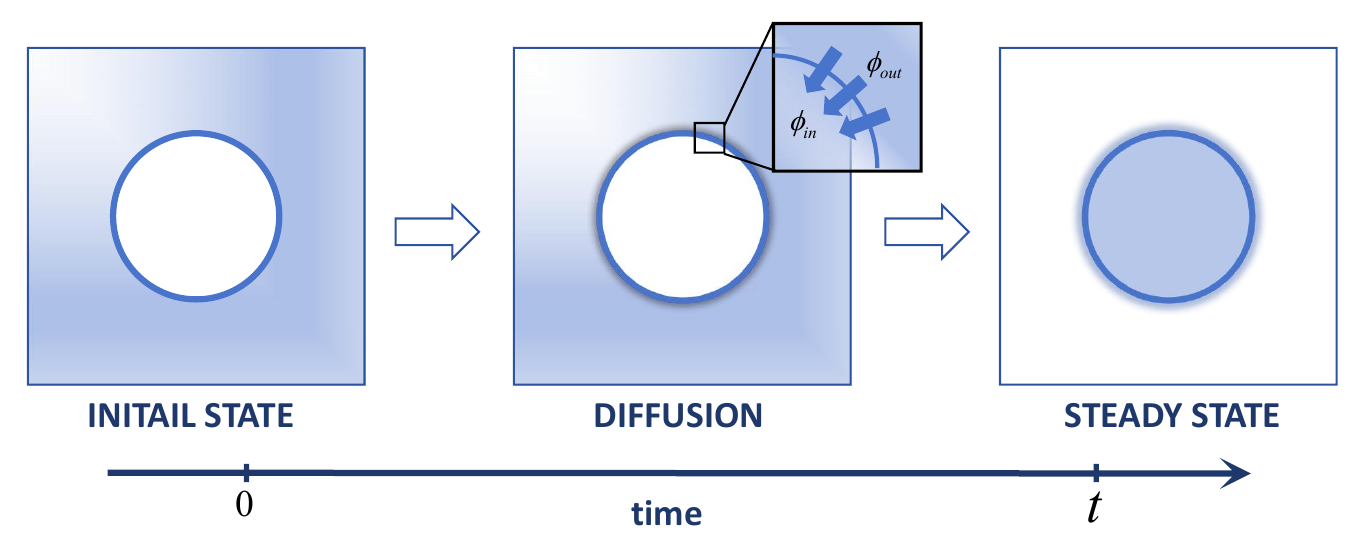}
	\caption{Schematic diagram of transport.}
\end{figure}

We simulated the time-dependent inward diffusion of a solute from a reservoir (matrix) into a solute-free domain (inclusion) across a stationary interface $\Gamma$. The transport kinetics were modeled using the sharp-interface limit framework proposed by Hubatsch et al.\cite{hubatsch2025transport}. The governing dynamics in the bulk sub-domains $\Omega_{in}$ and $\Omega_{out}$ follow the standard diffusion equation $\partial_t \phi = \nabla \cdot (D \nabla \phi)$. A key feature of our setup is the non-equilibrium boundary condition at the interface. Instead of assuming continuous chemical potential (perfect contact), we incorporated an interface resistance parameter $\rho$, which creates a barrier to transport. The mass flux $j$ across the interface is proportional to the deviation from the equilibrium partition, governed by:$$j \cdot n = \frac{1}{\rho} (\phi_{in} - \Gamma^* \phi_{out})$$where $\Gamma^*$ is the equilibrium partition coefficient. The system was initialized with a uniform external concentration $\phi_{out}(t=0) = \phi_0$ and an empty interior $\phi_{in}(t=0) = 0$. We observed the relaxation process as the internal concentration gradually increased towards the equilibrium state $\phi_{in}^{eq} = \Gamma^* \phi_{out}^{eq}$, impeded by the interface resistance $\rho$.

In the experiment, our setup is as follows: the inner region is an ellipse $\Omega$, the outer region is $[0,1]^2 \setminus \Omega$, and the interface is the elliptical boundary $\partial\Omega$. The parameters are set as $\rho = 0.2$, with interior and exterior diffusion coefficients of 0.1 and 0.5, respectively.

We constructed a dataset consisting of $N = 2000$ samples. Each sample $i$ contains a pair of geometric configurations and initial states, denoted as $(\mathbf{g}^{(i)}, u_{init}^{(i)},  u_{t}^{(i)})$.

\begin{itemize}
	\item \textbf{Geometry}: The interface $\Gamma^{(i)}$ is defined as an ellipse centered at $(0.5, 0.5)$, satisfying the equation $(x - 0.5)^2 / (a^{(i)})^2 + (y - 0.5)^2 / (b^{(i)})^2 = 1$. It is parameterized by the semi-major and semi-minor axes, denoted as $\mathbf{g}^{(i)} = [a^{(i)}, b^{(i)}]$, which are randomly sampled.
	\item \textbf{Initial states}: The initial field $u_{init}^{(i)}$ is generated as a piecewise function based on the elliptical interface. The values in the interior region $\Omega_{in}$ are strictly set to 0, while the exterior region $\Omega_{out}$ is initialized with a random function generated by a Gaussian process.
	\item \textbf{Reference solution}: The ground truth solution $u_{t}^{(i)}$ at $t = 0.1$ is obtained by solving the governing equation using FEniCS and time step of 0.01. 
\end{itemize}

\begin{figure}[t]
	\centering
	\includegraphics[width=0.8\textwidth]{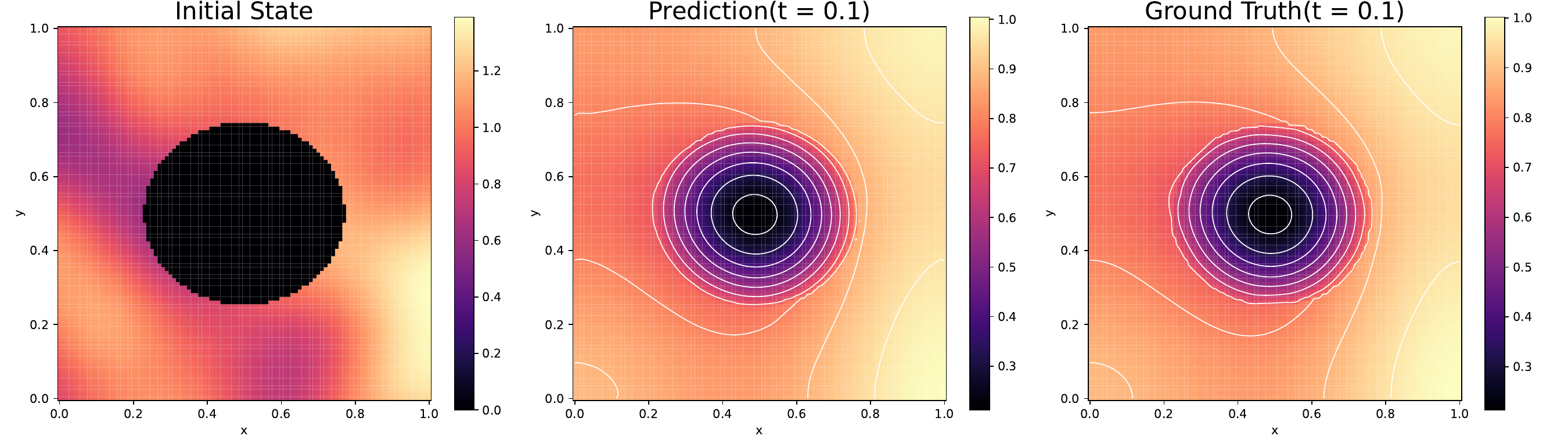}
	\caption{Prediction of transport kinetics of $L^2$ Error 0.31\%. }
	\label{transport}
\end{figure}

In this experiment, the network and training parameters are all identical to those in the previous experiment, so they are not listed separately. We present the prediction results in Figure \ref{transport}. In the comparison plot, we have drawn white contour lines, from which it can be seen that the model achieves good predictive performance, with an average $ L^2 $ error of 0.31\%.

\section{Conclusions}

In this work, we have presented a novel neural operator framework for solving linear interface problems on variable domains. We propose a general extension-based framework for linear PDE interface problems with varying domains, integrated with a TFPM basis-based approach to minimize GPU memory usage, particularly in high dimensions. We establish a rigorous theoretical foundation by proving the continuity of the Helmholtz operator under geometric variations and deriving error estimates for our encoding strategies. The resulting method is broadly applicable, handling complex interface geometries with minimal restrictions.

The main contributions of this paper are as follows:
\begin{itemize}
	\item We propose an extension-based framework for solving interface problems involving variations in both the domain and interfaces, along with encoding strategies suitable for general problems with multiple interfaces.
	\item We integrate the baseline method with the Tailored Finite Point Method (TFPM), introducing a TFPM basis-based approach for varying-domain interface problems. This approach effectively reduces GPU memory consumption for input data and is particularly advantageous in high-dimensional problems. 
	\item Theoretically, we conduct a series of theoretical analyses, including proving the continuity of the Helmholtz operator with respect to variations in the domain and interfaces under specific conditions, as well as providing error estimates for two domain encoding strategies.
\end{itemize}

While the proposed method demonstrates robust performance in different scenarios, extending it to higher dimensions still presents a significant challenge. A critical bottleneck is the reliance on standard grid-based encoding, which suffers from the curse of dimensionality; as the dimension increases, the size of the encoding tensor grows exponentially, imposing severe computational and memory overheads. Consequently, our future work will focus on developing more efficient, low-rank, or sparse encoding schemes. Our goal is to represent high-dimensional functions and domains with significantly less data, thereby ensuring both scalability and superior performance in high-dimensional applications.

\section*{Acknowledgments}

This research is supported by National Natural Science Foundation of China (Grant Nos. 12025104, Nos. 62106103) and the basic research project (ILF240021A25).

\end{document}